\newtheorem{theorem}{Theorem}[section]
\newtheorem{lemma}[theorem]{Lemma}
\newtheorem{proposition}[theorem]{Proposition}
\newtheorem{definition}{Definition}[section]
\newtheorem{remark}{Remark}[section]
\numberwithin{equation}{section}
\DeclareMathOperator*{\divergence}{div}
\DeclareMathOperator*{\argmin}{arg\,min}
\DeclareMathOperator*{\sgn}{sgn}
\title{Invariant $\varphi$-minimal sets and total variation denoising on graphs}
\author[1]{Clemens Kirisits}
\author[1]{Eric Setterqvist}
\author[1,2]{Otmar Scherzer}
\affil[1]{Faculty of Mathematics, University of Vienna, Vienna, Austria}
\affil[2]{Johann Radon Institute for Computational and Applied Mathematics (RICAM), Austrian Academy of Sciences, Linz, Austria}
\date{July 31, 2019}
\begin{document}

\maketitle

\begin{abstract}
Total variation flow, total variation regularization and the taut string algorithm are known to be equivalent filters for one-dimensional discrete signals. 
In addition, the filtered signal simultaneously minimizes a large number of convex functionals in a certain neighbourhood of the data.
In this article we study the question to what extent this situation remains true in a more general setting, namely for data given on the vertices of an oriented graph and the total variation being $J(f) = \sum_{i,j} |f(v_i) - f(v_j)|$.
Relying on recent results on invariant $\varphi$-minimal sets we prove that the minimizer to the corresponding Rudin-Osher-Fatemi (ROF) model on the graph has the same universal minimality property as in the one-dimensional setting.
Interestingly, this property is lost, if $J$ is replaced by the discrete isotropic total variation.
Next, we relate the ROF minimizer to the solution of the gradient flow for $J$.
It turns out that, in contrast to the one-dimensional setting, these two problems are not equivalent in general, but conditions for equivalence are available.
\end{abstract}

\section{Introduction}\label{sec:intro}
It is a well known fact that for one-dimensional discrete data total variation (TV) regularization and TV flow are equivalent.
More specifically, denote by
	$$ J(u) =  \sum_{i=1}^{n-1} |u_i - u_{i+1}| $$
the total variation of $u \in \mathbb{R}^n$, and let $f\in \mathbb{R}^n$ and $\alpha>0$ be given.
Then, as was shown in \cite{Steidl1}, the minimizer $u_\alpha$ of the functional
	$$ \frac{1}{2} \|f - u\|^2_{2} + \alpha J(u)$$
coincides with the solution to the Cauchy problem
	\begin{align*}
		u'(t) &\in - \partial J(u(t)), \quad t>0,\\
		u(0)  &= f,
	\end{align*}
at time $t = \alpha$. That is, $u_\alpha = u(\alpha)$ for all $\alpha >0.$
On the other hand, it is known that $u_\alpha$ can also be obtained by means of the taut string algorithm (see \cite{Mammen}), which reads as follows.
\begin{enumerate}
	\item Identify the vector $f\in \mathbb{R}^n$ with a piecewise constant function on the unit interval and integrate it to obtain the linear spline $F$.
	\item Find the ``taut string" $U_\alpha$, that is, the element of minimal graph length in a tube of width $2\alpha$ around $F$ with fixed ends:
		$$ U_\alpha = \argmin \left\{ \int_0^1 \sqrt{1 + (U'(x))^2}\, dx: \|U - F\|_\infty \le \alpha, U(0)=F(0), U(1) = F(1) \right\} $$
	\item Differentiate $U_\alpha$ to obtain $u_\alpha.$
\end{enumerate}
Problems which essentially can be modelled and solved by the taut string algorithm appear in diverse applications. Examples include production planning, see for instance \cite{Modigliani1}, and energy and information transmission, e.g.\ \cite{Salehi1} and \cite{Yang1}.
Extensions of the taut string algorithm to more general data have been studied in \cite{Gra07,GraObe08,Hinterberger1}.
Further suggestions of generalizations of the taut string algorithm, in both discrete and continuous settings, can be found in \cite[Chap. 4.4]{Scherzer1}.

It turns out that the taut string does not only have minimal graph length, but actually minimizes \emph{every} functional of the form
\begin{equation*}
	U \mapsto \int^1_0 \varphi(U'(x))\, dx,
\end{equation*}
where $\varphi:\mathbb{R} \to \mathbb{R}$ is an arbitrary convex function and $U$ ranges over the $2\alpha$-tube around $F$. Recently, this intriguing situation was studied in greater generality in \cite{Kruglyak2,Kruglyak3}.
The authors coined the term \emph{invariant $\varphi$-minimal} for sets which, like the $2\alpha$-tube, have an element that simultaneously minimizes a large class of distances. In addition they characterized these sets in the discrete setting.

In this article we study relations between TV regularization, TV flow and taut strings in a setting that contains the one outlined above as a special case. More specifically, we consider data $f$ as given on the vertices of an oriented graph $G=(V,E)$ together with the total variation
\begin{equation} \label{eq:tv}
	J(f) = \sum_{v,w} |f(v) - f(w)|,
\end{equation}
where the sum runs over all adjacent pairs of vertices $v,w$.

Our first result concerns the subdifferential of $J$. In Theorem \ref{thm:subdifphimin} we prove that $\partial J(f)$ is an invariant $\varphi$-minimal set for every $f:V\to \mathbb{R}$. It is noteworthy that, as is shown in Remark \ref{rem:isorof}, this property is not shared by the discrete isotropic total variation, which for $f\in \mathbb{R}^{m \times n}$ reads\footnote{Here, $f\in \mathbb{R}^{m \times n}$ corresponds to $f$ being defined on the vertices of an $m\times n$ Cartesian graph as depicted in Figure \ref{F1}.}
\begin{equation}\label{eq:isotv}
	\sum_{i,j }\sqrt{(f_{i+1,j} - f_{i,j})^{2} + (f_{i,j+1}-f_{i,j})^{2}}
\end{equation}
and has been widely used in imaging applications, see \cite{AujAubBlaCha05,AujGilChaOsh06,Chambolle2} for instance.

Next we consider the Rudin-Osher-Fatemi (ROF) model \cite{Rudin1} on the graph
\begin{equation} \label{eq:rofgraph}
	\min_{u : V \to \mathbb{R}} \frac{1}{2} \sum_{v\in V}|f(v) - u(v)|^2 + \alpha J(u), \quad \alpha \ge 0.
\end{equation}
From its dual formulation and Theorem \ref{thm:subdifphimin} it follows that the solution $u_\alpha$ of problem \eqref{eq:rofgraph} has a characteristic feature resembling the universal minimality property of the taut string: It simultaneously minimizes
\begin{equation*}
	 \sum_{v\in V} \varphi (u(v))
\end{equation*}
over the set $f - \alpha \partial J(0)$ for every convex $\varphi,$ see Theorem \ref{T1}. We stress again that the minimizer of the isotropic ROF model, where $J(f)$ is given by \eqref{eq:isotv}, does not have this property.

Because of its anisotropy different variants of model \eqref{eq:rofgraph} have been used for imaging problems with an underlying rectilinear geometry \cite{BerBurDroNem06,choksi2011,SanOzkRomGok18,Setzer1}. Moreover, in contrast to \eqref{eq:isotv}, $J$ as given by \eqref{eq:tv} is submodular and for the minimization of submodular functions many efficient algorithms are available, for instance, graph cut algorithms \cite{Cha05,ChaDar09,DarSig06a,Hoc01}.

Finally, we examine the gradient flow for $J$ and how it relates to the ROF model. Such relations in higher dimensional settings have been the subject of recent investigations. In \cite{Burger1} discrete variational methods and gradient flows for convex one-homogeneous functionals are investigated and sufficient conditions for their equivalence are provided. A sufficient condition for the equivalence of TV regularization and TV flow with $\ell^{1}$-anisotropy in the continuous two-dimensional setting is given in \cite{Lasica1}. Considering the continuous setting with isotropic TV, it is shown in \cite{Jalalzai1} that TV regularization and TV flow coincide for radial data but in general are non-equivalent.

Our results in this direction are the following. First and foremost TV regularization and TV flow are not equivalent for general graphs and data $f$, see Theorem \ref{prop:noneq}. This result is based on a constructed example for which we are able to explicitly track the evolution of the two solutions $u_\alpha$ and $u(t)$ as $\alpha$ and $t$ range over an interval $[0,L].$ The example also shows that, in contrast to the one-dimensional setting, the jump sets do not necessarily evolve in a monotone way. Moreover, we investigate conditions for equality of $u_\alpha$ and $u(t=\alpha)$ and discuss situations in which they apply. 

To summarize, let $\psi:\mathbb{R}\to\mathbb{R}$ by a strictly convex function, for the sake of analogy pick $\psi(x) = \sqrt{1 + x^2}.$ Then the problem
\begin{equation*}
	\min_{u \in f - \alpha \partial J(0)} \sum_{v\in V} \psi (u(v))
\end{equation*}
may be seen as a generalization of the taut string algorithm to oriented graphs for the following reasons.
\begin{itemize}
	\item The set $f - \alpha\partial J(0)$ reduces to the set of derivatives of the elements in the $2\alpha$-tube around $F$ in case the underlying graph is a path, that is, it models the one-dimensional situation described in the first paragraph of this introduction.
	\item The solution $u_{\alpha}$ in fact minimizes $\sum_{v\in V}\varphi(u(v))$ for any convex function $\varphi$.
	\item $u_{\alpha}$ minimizes the corresponding ROF model \eqref{eq:rofgraph}.
	\item Further, if $\alpha$ is either sufficiently small or sufficiently large, then $u_{\alpha}$ equals the TV flow solution at time $t=\alpha$.
\end{itemize}

This article is organized as follows. In Section \ref{S1} we introduce the graph setting and collect some properties of the total variation $J$. In particular we discuss the concept of invariant $\varphi$-minimal sets in Section \ref{sec:phiminimal}, while establishing a connection to base polyhedra in Section \ref{sec:submodular}. Sections \ref{sec:rof} and \ref{sec:tvflow} are dedicated to the two main problems considered in this paper, that is, total variation regularization and total variation flow, respectively. In Section \ref{sec:comparison} we compare the flow and ROF solutions. The detailed calculations underlying several results of Section \ref{sec:comparison} are collected in the appendix.

\section{Total variation on graphs} \label{S1}
Throughout this article, following the terminology of \cite{ChaLesZha10}, we consider oriented connected graphs $G=(V,E)$. That is, $V=\{v_1,\ldots,v_n\}$ and $E \subset V \times V$ with the additional conditions that, first, $(v_i,v_j) \in E$ implies $(v_j,v_i) \notin E$ and, second, there is a path between every pair of vertices (ignoring edge orientations). Whenever we simply write ``graph" below, we implicitly mean a graph of this type.
For $v,w \in V$ the edge $(v,w)\in E$ is interpreted as directed from $v$ to $w$.
Let $\mathbb{R}^{V}$ and $\mathbb{R}^{E}$ be the space of real-valued functions defined on the vertices and edges, respectively. We consider the usual $\ell^{p}$-norms on $\mathbb{R}^{V}$
\begin{align*}
	\|u\|^p_{p}		&=	\sum_{v\in V}|u(v)|^{p}, \quad 1\leq p<\infty, \\
	\|u\|_{\infty}	&=	\max_{v\in V}|u(v)|.
\end{align*}
Analogous $\ell^{p}$-norms will be considered on $\mathbb{R}^{E}$. In particular, denote the closed $\ell^{\infty}$-ball of radius $\alpha \ge 0$ in $\mathbb{R}^{E}$ by
\begin{align*}
 \mathcal{B}_{\alpha}=\{H\in \mathbb{R}^{E}:\lVert H\rVert_{\infty}\leq\alpha\}.
\end{align*}
Given $H\in \mathbb{R}^{E}$, define the \emph{divergence operator} $\divergence:\mathbb{R}^{E}\rightarrow \mathbb{R}^{V}$ according to
\begin{align*}
 (\divergence H)(v)=\sum_{w\in V:(w,v)\in E}H((w,v))-\sum_{w\in V:(v,w)\in E}H((v,w)).
\end{align*}
The divergence at the vertex $v$ can be thought of as the sum of the flows on the incoming edges minus the sum of the flows on the outgoing edges. We will frequently apply $\divergence$ to the unit ball $\mathcal{B}_{1}\in\mathbb{R}^{E}$ and its subset $\mathcal{B}_{1,u}$ defined, for given $u\in\mathbb{R}^{V}$, by
\begin{align*} 
\mathcal{B}_{1,u}=\left\{H\in \mathbb{R}^{E}:
 H((v_{i},v_{j}))\in
 \left\{
\begin{array}{ll}
\{1\}, & u(v_{i})<u(v_{j}),\\
\left[-1,1\right], & u(v_{i})=u(v_{j}),\\
\{-1\}, & u(v_{i})>u(v_{j})
\end{array}
\right.
\right\}.
\end{align*}
Introduce further the natural scalar product on $\mathbb{R}^{V}$ according to
\begin{align*}
 \langle u,h\rangle_{\mathbb{R}^{V}}=\sum_{v\in V}u(v)h(v).
\end{align*}
For a closed and convex set $A\subset \mathbb{R}^V$ the \emph{support function} $\sigma_A : \mathbb{R}^V \to \mathbb{R}$ is given by
\begin{align*}
	\sigma_A (u) = \sup_{h\in A}\langle u , h \rangle_{\mathbb{R}^{V}}.
\end{align*}
\begin{definition}\label{def:tv}
The \emph{total variation} on $\mathbb{R}^{V}$ is defined as the support function of the set $\divergence \mathcal{B}_1$,
\begin{align}
 J(u)	&=	\underset{h\in\divergence \mathcal{B}_{1}}{\sup}\langle u,h\rangle_{\mathbb{R}^{V}}. \notag
\intertext{Since $J(u) = \langle u, \divergence H \rangle_{\mathbb{R}^V}$ for every $H \in \mathcal{B}_{1,u}$ we can rearrange the inner product to obtain} 
 J(u)	&=	\sum_{(v_{i},v_{j})\in E}|u(v_{j})-u(v_{i})|. \label{TVG2}
\end{align}
\end{definition}

\begin{remark}
Equation \eqref{TVG2} shows that $J$ is independent of the orientation of edges, even though the divergence is not. All subsequent results remain true regardless of edge orientation, and also apply to simple undirected graphs once each edge has been oriented arbitrarily.
\end{remark}

\begin{definition}\label{dfn:subdif}
For every $u\in \mathbb{R}^V$ the \emph{subdifferential} $\partial J(u)$ is defined as the set of all elements $u^{*}\in \mathbb{R}^{V}$ such that
\begin{align*}
\langle h-u,u^{*}\rangle_{\mathbb{R}^{V}}+J(u)\leq J(h) \quad \text{for all } h\in \mathbb{R}^{V}.
\end{align*}
\end{definition}
Since $\partial J(u)$ is a closed, convex and non-empty subset of $\mathbb{R}^{V}$, we can highlight one particular element.
\begin{definition}
 The element of minimal $\ell^2$-norm in $\partial J(u)$ will be referred to as the \emph{minimal section} of $\partial J(u)$. It is denoted by $\partial^\circ J(u)$, that is,
 \begin{align*}
  \partial^\circ J(u) = \underset{u^{*}\in\partial J(u)}{\argmin}\lVert u^{*}\rVert_{{2}}.
 \end{align*}
\end{definition}
\par The following lemma collects some results for the subdifferential $\partial J$ which will be used in the sequel.
\begin{lemma} \label{L1}
\leavevmode
\begin{enumerate}
\item $\partial J(0)=\divergence\mathcal{B}_{1}$.
\item $\partial J(u)=\{u^{*}\in\partial J(0):\langle u,u^{*}\rangle_{\mathbb{R}^{V}}=J(u)\} \quad \text{for all } u\in \mathbb{R}^{V}$.
\item $\partial J(u)=\divergence\mathcal{B}_{1,u} \quad \text{for all } u\in \mathbb{R}^{V}$.
\end{enumerate}
\end{lemma}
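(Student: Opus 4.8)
The plan is to derive all three statements from the single fact that $J$ is the support function of the compact convex set $\divergence\mathcal{B}_1$, so that the standard calculus of subdifferentials of sublinear functions applies. I would first record the trivial observations that $J$ is convex, positively homogeneous, and $J(0)=0$, and that $\divergence\mathcal{B}_1$ is closed and convex, being the image of the compact convex set $\mathcal{B}_1$ under the linear map $\divergence$. For part (i), I would note that, since $J(0)=0$, the defining inequality of $\partial J(0)$ reads $\langle h,u^{*}\rangle_{\mathbb{R}^{V}}\le J(h)$ for all $h\in\mathbb{R}^{V}$; because $J=\sigma_{\divergence\mathcal{B}_1}$ and $\divergence\mathcal{B}_1$ is closed and convex, this holds for every $h$ precisely when $u^{*}\in\divergence\mathcal{B}_1$. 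The only nontrivial implication here is the separating-hyperplane argument: if $u^{*}\notin\divergence\mathcal{B}_1$, one separates $u^{*}$ from this set to obtain an $h$ with $\langle h,u^{*}\rangle_{\mathbb{R}^{V}}>\sigma_{\divergence\mathcal{B}_1}(h)=J(h)$, a contradiction.

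For part (ii), I would argue the two inclusions directly from Definition \ref{dfn:subdif}. If $u^{*}\in\partial J(u)$, then testing the defining inequality at $h=0$ gives $\langle u,u^{*}\rangle_{\mathbb{R}^{V}}\ge J(u)$ (using $J(0)=0$), while testing at $h=2u$ and using $J(2u)=2J(u)$ gives $\langle u,u^{*}\rangle_{\mathbb{R}^{V}}\le J(u)$; hence $\langle u,u^{*}\rangle_{\mathbb{R}^{V}}=J(u)$. Substituting this equality back into the defining inequality collapses it to $\langle h,u^{*}\rangle_{\mathbb{R}^{V}}\le J(h)$ for all $h$, i.e.\ $u^{*}\in\partial J(0)$ by part (i). Conversely, if $u^{*}\in\partial J(0)$ and $\langle u,u^{*}\rangle_{\mathbb{R}^{V}}=J(u)$, then for every $h\in\mathbb{R}^{V}$ one has $\langle h-u,u^{*}\rangle_{\mathbb{R}^{V}}+J(u)=\langle h,u^{*}\rangle_{\mathbb{R}^{V}}\le J(h)$, so $u^{*}\in\partial J(u)$.

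For part (iii), the inclusion $\divergence\mathcal{B}_{1,u}\subset\partial J(u)$ follows from part (ii): since $\mathcal{B}_{1,u}\subset\mathcal{B}_1$ we get $\divergence\mathcal{B}_{1,u}\subset\partial J(0)$, and the edgewise identity already recorded after Definition \ref{def:tv}, namely $\langle u,\divergence H\rangle_{\mathbb{R}^{V}}=\sum_{(v_i,v_j)\in E}H((v_i,v_j))(u(v_j)-u(v_i))=J(u)$ for every $H\in\mathcal{B}_{1,u}$, supplies the required equality $\langle u,u^{*}\rangle_{\mathbb{R}^{V}}=J(u)$. For the reverse inclusion, take $u^{*}\in\partial J(u)$; by part (ii) there is $H\in\mathcal{B}_1$ with $\divergence H=u^{*}$ and $\langle u,u^{*}\rangle_{\mathbb{R}^{V}}=J(u)$. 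Expanding $\langle u,\divergence H\rangle_{\mathbb{R}^{V}}$ edgewise and comparing with $J(u)=\sum_{(v_i,v_j)\in E}|u(v_j)-u(v_i)|$, the bound $|H((v_i,v_j))|\le 1$ forces the termwise equalities $H((v_i,v_j))(u(v_j)-u(v_i))=|u(v_j)-u(v_i)|$ on every edge, which is exactly the condition $H\in\mathcal{B}_{1,u}$; hence $u^{*}\in\divergence\mathcal{B}_{1,u}$.

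The only genuinely non-elementary ingredient is the separation argument in part (i); the rest is bookkeeping with the definition of the subdifferential and the edgewise expansion of $\langle u,\divergence H\rangle_{\mathbb{R}^{V}}$. I expect the termwise-equality step in part (iii) to be the point most in need of careful statement, since it is where the specific shape of $\mathcal{B}_{1,u}$ enters — in particular the fact that a tie $u(v_i)=u(v_j)$ imposes no constraint on $H$ on that edge, matching the interval $[-1,1]$ in the definition of $\mathcal{B}_{1,u}$.
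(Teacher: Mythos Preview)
Your proof is correct and follows essentially the same approach as the paper: item~1 uses that $J$ is the support function of the closed convex set $\divergence\mathcal{B}_1$, item~2 uses homogeneity of $J$, and item~3 uses the edgewise identity $\langle u,\divergence H\rangle_{\mathbb{R}^V}=\sum_{(v_i,v_j)\in E}H((v_i,v_j))(u(v_j)-u(v_i))$ together with item~2 to identify $\{H\in\mathcal{B}_1:\langle u,\divergence H\rangle_{\mathbb{R}^V}=J(u)\}$ with $\mathcal{B}_{1,u}$. The paper's proof is terser, but the logical structure is identical; your version simply makes explicit the separation argument in item~1 and the termwise-equality step in item~3 that the paper leaves to the reader.
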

\begin{proof}
The functional $J$ is the support function of the closed and convex set $\divergence\mathcal{B}_{1}$ and therefore $\partial J(0)=\divergence\mathcal{B}_{1}$.

Item 2 follows from Definition \ref{dfn:subdif} and the absolute $1$-homogeneity of $J$, that is, $J(tu) = |t|J(u)$ for all $t\in \mathbb{R}$ and $u \in \mathbb{R}^V$.

Regarding item 3, note that $J(u)= \langle u,\divergence H \rangle_{\mathbb{R}^V}$ for $H\in\mathcal{B}_{1}$ if and only if $H\in\mathcal{B}_{1,u}$. In view of item 2, it is then clear that $\partial J(u)=\divergence\mathcal{B}_{1,u}$.
\end{proof}
\begin{remark}\label{rem:subdifequal1}
\leavevmode
\begin{enumerate}
\item Since, according to item 3 in Lemma \ref{L1}, the set $\mathcal{B}_{1,u}$ only depends on $\sgn(u(v_i) - u(v_j))$ for every edge $(v_{i},v_{j})\in E$, we have 
\begin{align*}
 \partial J(u)=\partial J(h),
\end{align*}
if and only if
\begin{align*}
 \sgn(u(v_{i})-u(v_{j}))=\sgn(h(v_{i})-h(v_{j}))
\end{align*}
for each $(v_{i},v_{j})\in E$.
\item It now follows immediately that, if the subdifferentials of $J$ at $u$ and $h$ coincide, then they also coincide for every convex combination of $u$ and $h$. That is, $\partial J(u) = \partial J(h)$ implies $\partial J(\lambda u + (1-\lambda)h) = \partial J(u)$ for every $\lambda \in (0,1)$.
\item Lemma \ref{L1} also implies that the number of different subdifferentials of $J$ is finite. In particular,
 \begin{align*}
  \left|\left\{\partial J(u):u\in \mathbb{R}^{V}\right\}\right|\leq 3^{\left|E\right|}.
 \end{align*}
 This must not be confused with the fact that for any given $u\in \mathbb{R}^{V}$ the subdifferential $\partial J(u)$ might have infinitely many elements.
\end{enumerate}
\end{remark}

\subsection{Connections to invariant $\varphi$-minimal sets}\label{sec:phiminimal}
In this subsection we recall the notion of invariant $\varphi$-minimal sets introduced in \cite{Kruglyak2} and show that the subdifferential $\partial J(u)$ is an example of such a set.
\begin{definition}\label{def:invphimin}
 A set $\Omega\subset\mathbb{R}^{n}$ is called \emph{invariant $\varphi$-minimal} if for every $a\in\mathbb{R}^{n}$ there exists an element $x_{a}\in\Omega$ such that
 \begin{align} \label{IP}
  \sum^{n}_{i=1}\varphi(x_{a,i}-a_{i})\leq\sum^{n}_{i=1}\varphi(x_{i}-a_{i})
 \end{align}
holds for all $x\in\Omega$ and all convex functions $\varphi:\mathbb{R}\rightarrow\mathbb{R}$.
\end{definition}
An interesting property of invariant $\varphi$-minimal sets is the following. By considering the particular convex function $\varphi(x)=\left|x\right|^{p}$, $1\leq p<\infty$, in \eqref{IP} we obtain
\begin{align*}
\sum^{n}_{i=1}\left|x_{a,i}-a_i\right|^{p} \leq \sum^{n}_{i=1} \left|x_i-a_i\right|^{p}
\end{align*}
for all $x\in\Omega$. Taking the $p$-th root and including the case $p=\infty$, which follows by limiting arguments, shows that the element $x_{a}$ satisfies
\begin{align*}
 \lVert x_{a}-a\rVert_{{p}}\leq\lVert x-a\rVert_{{p}}
\end{align*}
for all $x\in\Omega$ and $1\leq p\leq\infty$. That is, $x_{a}$ is an element of best approximation of $a$ in $\Omega$ with respect to all $\ell^{p}$-norms, $1\leq p\leq\infty$.

Before we can restate two characterizations of invariant $\varphi$-minimal sets from \cite{Kruglyak2} we have to introduce several notions about convex subsets of $\mathbb{R}^n$.

A hyperplane $H$ \emph{supports} a set $M\subset\mathbb{R}^{n}$ if $M$ is contained in one of the two closed halfspaces with boundary $H$ and at least one boundary point of $M$ is in $H$. Assume that $M\subset\mathbb{R}^{n}$ is convex. Following the terminology of \cite{Grunbaum2} a set $F\subset M$ is called a \emph{face} of $M$ if $F=\emptyset$, $F=M$ or if $F=M\cap H$ where $H$ is a supporting hyperplane of $M$. A \emph{convex polytope} $P$ in $\mathbb{R}^{n}$ is a bounded set which is the intersection of finitely many closed halfspaces. Note that a face of a convex polytope is itself a convex polytope.

 Let $\Omega\subset\mathbb{R}^{n}$ be closed and convex and denote by $\left\{e_{i}\right\}^{n}_{i=1}$ the standard basis of $\mathbb{R}^{n}$. For $x\in\Omega$, consider all vectors $y=e_{i}-e_{j}$ such that $x+\beta y\in\Omega$ for some $\beta>0$. Let $S_{x}$ denote the set of all such vectors at $x$. Further, let $K_{x}=\{z:z=\sum_{y\in S_{x}}\lambda_{y}y,\lambda_{y}\geq 0\}$ be the convex cone generated by the vectors in $S_{x}$. We say that $\Omega$ has the \emph{special cone property} if $\Omega\subset x+K_{x}$ for each $x\in\Omega$.
\begin{remark}
In \cite{Kruglyak2} vectors of the type $e_{i}$ and $e_{i}+e_{j}$ are considered in addition to $e_i-e_j$ in the definition of the special cone property. Including these vectors leads to a characterization of the related notion of invariant $K$-minimal sets.
\end{remark}

\begin{theorem}\label{thm:phimin}
Let $\Omega\subset\mathbb{R}^{n}$ be a bounded, closed and convex set. Then the following statements are equivalent.
\begin{enumerate}
	\item $\Omega$ is invariant $\varphi$-minimal.
	\item $\Omega$ has the special cone property.
	\item $\Omega$ is a convex polytope where the affine hull of any of its faces is a shifted subspace of $\mathbb{R}^{n}$ spanned by vectors of the type $e_{i}-e_{j}$.
\end{enumerate}
\end{theorem}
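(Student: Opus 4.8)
The plan is to reduce to a non-degenerate situation, then prove $(2)\Rightarrow(1)$ and $(1)\Rightarrow(2)$ directly, and finally treat $(2)\Leftrightarrow(3)$ as a question about the facial structure of $\Omega$; I expect $(1)\Rightarrow(2)$ to be the main obstacle. \emph{Reductions.} Every vector $e_i-e_j$ has vanishing coordinate sum, so $K_x\subseteq\{y:\sum_k y_k=0\}$ for all $x$, whence both (2) and (3) force $\Omega$ into an affine hyperplane $\{x:\sum_k x_k=c\}$; choosing $\varphi(t)=\pm t$ in \eqref{IP} shows the same is necessary for (1). If $\Omega$ is not contained in such a hyperplane, all three statements are false and there is nothing to prove, so assume $\sum_k x_k=c$ on $\Omega$. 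Since the class of convex functions is invariant under $\varphi\mapsto\varphi(\cdot-s)$, the datum $a$ in \eqref{IP} may be shifted by $s\mathbf 1$ without affecting anything, so we may also take $\sum_k a_k=c$. Because $\varphi(t)=t^2$ is admissible, any $x_a$ satisfying \eqref{IP} is forced to be the (unique) $\ell^2$-projection of $a$ onto $\Omega$, which I denote $\pi(a)$. Finally I will use the elementary fact that, for $u,v$ with $\sum_k u_k=\sum_k v_k$, one has $\sum_k\varphi(u_k)\le\sum_k\varphi(v_k)$ for all convex $\varphi$ if and only if $\sum_k\max(u_k-t,0)\le\sum_k\max(v_k-t,0)$ for all $t\in\mathbb{R}$ (the integral representation of convex functions in one direction, $\varphi(s)=\max(s-t,0)$ in the other).

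\emph{$(2)\Rightarrow(1)$.} Fix $a$ and set $x^*=\pi(a)$. Given $x\in\Omega$, the special cone property yields $x-x^*=\sum_{y\in S_{x^*}}\lambda_y\,y$ with $\lambda_y\ge0$ and $y=e_{i(y)}-e_{j(y)}$; for each such $y$ the segment $x^*+\beta y$ stays in $\Omega$ for small $\beta>0$, and minimality of $\beta\mapsto\|x^*+\beta y-a\|_2$ at $\beta=0$ gives $\langle x^*-a,y\rangle\ge0$, i.e.\ $(x^*-a)_{i(y)}\ge(x^*-a)_{j(y)}$. Now fix $t$, put $g_t(v)=\sum_k\max(v_k-t,0)$, and let $\xi$ be the subgradient of $g_t$ at $x^*-a$ with $\xi_k=1$ if $(x^*-a)_k>t$ and $\xi_k=0$ otherwise. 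Then
\[
g_t(x-a)-g_t(x^*-a)\ \ge\ \langle\xi,x-x^*\rangle\ =\ \sum_y\lambda_y\bigl(\xi_{i(y)}-\xi_{j(y)}\bigr)\ \ge\ 0,
\]
because $(x^*-a)_{i(y)}\ge(x^*-a)_{j(y)}$ implies $\xi_{i(y)}\ge\xi_{j(y)}$. As $t$ and $x$ were arbitrary, $x^*-a$ is majorized by $x-a$ for every $x\in\Omega$, which by the reduction is exactly \eqref{IP} with $x_a=x^*$.

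\emph{$(1)\Rightarrow(2)$.} By contraposition, assume the special cone property fails at some $x_0\in\Omega$: there is $z\in\Omega$ with $w:=z-x_0\notin K_{x_0}$. As $K_{x_0}$ is finitely generated, hence closed and convex, it can be separated from $w$: there is $c$ with $\langle c,w\rangle>0$ and $c_i\le c_j$ whenever $e_i-e_j\in S_{x_0}$. The plan is to build a datum $a$ whose projection $\pi(a)$ violates \eqref{IP}. One fixes a decomposition $w=\sum_{i,j}\mu_{ij}(e_i-e_j)$ with $\mu_{ij}\ge0$ (possible since $\sum_k w_k=0$) and observes that, because $w\notin K_{x_0}$, some pair $(p,q)$ with $\mu_{pq}>0$ must have $e_p-e_q\notin S_{x_0}$: the displacement $w$ achievable inside $\Omega$ cannot be realised by any nonnegative combination of coordinate-difference moves available at $x_0$, and this is what $c$ certifies. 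One then places $\pi(a)$ at a suitable point of $[x_0,z]$ and chooses $a$ so that, at some threshold $t$, the coordinates of $\pi(a)-a$ exceeding $t$ are arranged against $c$; moving from $\pi(a)$ towards $z$ then strictly decreases $g_t$, so $\pi(a)-a$ is \emph{not} majorized by $z-a$ and $\pi(a)$ cannot satisfy \eqref{IP}. Making these choices precise while checking simultaneously that $\pi(a)$ stays the $\ell^2$-projection is the delicate and, in my view, principal step; but once $(p,q)$ and the decomposition are fixed it is a finite explicit computation in the plane spanned by $e_p$, $e_q$ and $w$.

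\emph{$(2)\Leftrightarrow(3)$.} This is a statement about the faces of $\Omega$. For $(3)\Rightarrow(2)$: given $x\in\Omega$, let $F$ be the smallest face containing $x$; then $x$ lies in the relative interior of $F$, and since by (3) the direction space $L_F$ of $\mathrm{aff}(F)$ is spanned by vectors $e_i-e_j$, for each such vector both $\pm(e_i-e_j)$ lie in $S_x$, so $K_x\supseteq L_F$; an induction over the (finitely many) faces of $\Omega$, peeling off one face at a time, then gives $\Omega-x\subseteq K_x$. For $(2)\Rightarrow(3)$ one first argues that $\Omega$ is a polytope — exploiting that only finitely many cones occur as some $K_x$, each generated by a subset of the finite set $\{e_i-e_j\}$, so $\Omega$ has finitely many distinct tangent cones and hence finitely many faces — and then, applying the special cone property at a relative interior point of each face, reads off that the direction space of that face is spanned by $e_i-e_j$ vectors. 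Extracting the polytope property from the purely local cone hypothesis is the point here that needs the most care.
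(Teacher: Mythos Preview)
The paper does not prove this theorem: its entire proof consists of citing Theorems~3.2, 4.2 and 4.3 of \cite{Kruglyak2}. Your proposal is therefore not a variant of the paper's argument but an attempt to reprove the cited results from scratch. That is a reasonable thing to do, and your $(2)\Rightarrow(1)$ via majorization is clean and complete; it is essentially the mechanism behind the result in \cite{Kruglyak2}. The remaining implications, however, have genuine gaps.

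For $(1)\Rightarrow(2)$ you correctly isolate a separating functional $c$ and a pair $(p,q)$ with $e_p-e_q\notin S_{x_0}$, but the construction of the datum $a$ is only asserted, not performed. The phrase ``place $\pi(a)$ at a suitable point of $[x_0,z]$'' hides the difficulty: since $[x_0,z]\subset\Omega$, choosing $a$ on that segment gives $\pi(a)=a$ and $\pi(a)-a=0$, which is majorized by every zero-sum vector and so never violates \eqref{IP}; to place $\pi(a)$ anywhere nontrivial you need that point on $\partial\Omega$ and $a$ outside $\Omega$ with a specific normal direction, and you must simultaneously arrange the coordinates of $\pi(a)-a$ relative to a threshold $t$. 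Saying this is ``a finite explicit computation in the plane spanned by $e_p$, $e_q$ and $w$'' is not a proof --- in particular nothing so far guarantees that the relevant geometry is two- or three-dimensional. This is precisely the step that \cite{Kruglyak2} handles with a dedicated argument, and you have not replaced it.

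For $(2)\Rightarrow(3)$ you infer polyhedrality from ``finitely many possible tangent cones''. Under the special cone property one indeed has $T_\Omega(x)=K_x$, so the tangent cones lie in a finite list; but the implication ``finitely many tangent cones $\Rightarrow$ finitely many faces $\Rightarrow$ polytope'' is not immediate for a general bounded closed convex set and requires an argument (e.g.\ that the stratification of $\Omega$ by the value of $K_x$ has relatively open pieces that are convex and finite in number, and that their closures are the faces). You flag this yourself, but as written it is a gap, not a proof.
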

\begin{proof}
Equivalence of statements 1 and 2 follows from combining Thms.\ 3.2 and 4.2 in \cite{Kruglyak2}. Equivalence of statements 1 and 3 is precisely Thm.\ 4.3 in \cite{Kruglyak2}.
\end{proof}

An example of an invariant $\varphi$-minimal set in the plane is depicted in Figure \ref{fig:invphimin}, left panel.
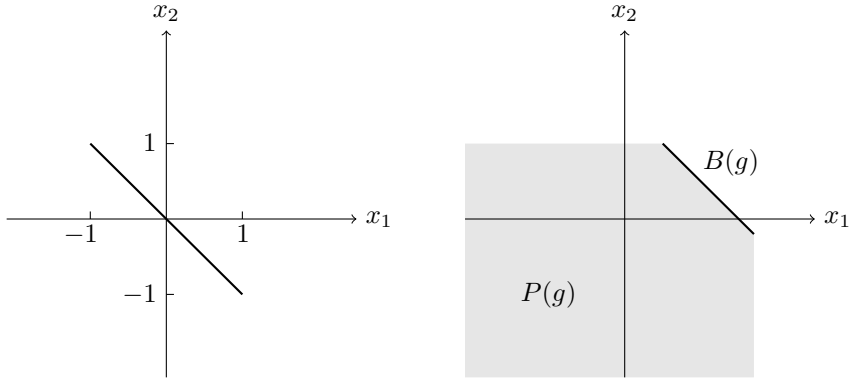
\begin{figure} 
\centering
\begin{tikzpicture}[scale=1]
    \draw[->] (-2.1,0)  -- (2.5,0) node(xline)[right] {$x_1$};
    \draw[->] (0,-2.1) -- (0,2.5) node(yline)[above] {$x_2$};
    \draw[-,thick] (-1,1) -- (1,-1);
    \draw[-] (1,0) -- (1,.1);
    \draw[-] (0,1) -- (.1,1);
    \draw[-] (-1,0) -- (-1,.1);
    \draw[-] (0,-1) -- (.1,-1);
    \node at (.875,-.2)  {$\phantom{-}1$};
    \node at (-.35,1)  {$\phantom{-}1$};
    \node at (-1.125,-.2)  {$-1$};
    \node at (-.35,-1)  {$-1$};
\end{tikzpicture}
\qquad
\begin{tikzpicture}[scale=1]
    \fill[gray!20] (-2,1) -- (0.5,1) -- (1.7,-0.2) -- (1.7,-2.1) --(-2.1,-2.1) -- (-2.1,1);
    \draw[->] (-2.1,0)  -- (2.5,0) node(xline)[right] {$x_{1}$};
    \draw[->] (0,-2.1) -- (0,2.5) node(yline)[above] {$x_{2}$};
    \draw[-,thick] (0.5,1) -- (1.7,-0.2);
    \node at (-1,-1)  {$P(g)$};
    \node at (1.4,0.75)  {$B(g)$};
\end{tikzpicture}
\caption{Left: The slanted line segment is an example of an invariant $\varphi$-minimal set as characterized by Theorem \ref{thm:phimin}. In fact, identifying $x_i$ with $u^*(v_i)$, it is the subdifferential $\partial J(0)$ for $J$ being defined on the graph with $V=\{v_1,v_2\}$ and $E=\{(v_1,v_2)\}$. All other invariant $\varphi$-minimal sets in $\mathbb{R}^2$ are translations and rescalings of $\partial J(0)$. Right: Submodular polyhedron (in grey) and base polyhedron (slanted line segment) in the plane.}
\label{fig:invphimin}
\end{figure}
We are now ready to show
\begin{theorem} \label{thm:subdifphimin}
 The subdifferential $\partial J(u)$ is an invariant $\varphi$-minimal set.
\end{theorem}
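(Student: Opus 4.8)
The plan is to verify condition 2 of Theorem \ref{thm:phimin}, namely that $\partial J(u)$ has the special cone property; the remaining hypotheses of that theorem are immediate. Indeed, by item 3 of Lemma \ref{L1} we have $\partial J(u)=\divergence\mathcal{B}_{1,u}$, and since $\mathcal{B}_{1,u}\subseteq[-1,1]^{E}$ is a (compact, convex) box and $\divergence$ is linear and hence continuous, $\partial J(u)$ is a bounded, closed and convex set, as required.

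The one observation that makes everything work is that $\divergence$ sends each coordinate direction of $\mathbb{R}^{E}$ to a vector of precisely the type occurring in the definition of the special cone property. Writing $\delta_{e}\in\mathbb{R}^{E}$ for the indicator of an edge $e=(v_{i},v_{j})\in E$ and $e_{v}$ for the standard basis of $\mathbb{R}^{V}$, the defining formula for the divergence gives immediately
\[
	\divergence\delta_{(v_{i},v_{j})}=e_{v_{j}}-e_{v_{i}}.
\]
Thus the images under $\divergence$ of the coordinate directions are exactly vectors of the form $e_{i}-e_{j}$, and so are their negatives.

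With this in hand the special cone property follows by a direct computation. Fix $x=\divergence h\in\partial J(u)$ with $h\in\mathcal{B}_{1,u}$, and let $E_{0}=\{(v_{i},v_{j})\in E:u(v_{i})=u(v_{j})\}$ be the set of edges on which the constraint defining $\mathcal{B}_{1,u}$ is the full interval $[-1,1]$; on every other edge, $h$ and any other element of $\mathcal{B}_{1,u}$ share the same fixed value $\pm1$. If $e\in E_{0}$ and $h_{e}<1$, then $h+\beta\delta_{e}\in\mathcal{B}_{1,u}$ for small $\beta>0$, hence $x+\beta\,\divergence\delta_{e}\in\partial J(u)$, so the vector $e_{v_{j}}-e_{v_{i}}$ lies in $S_{x}$; symmetrically, $h_{e}>-1$ puts $e_{v_{i}}-e_{v_{j}}$ into $S_{x}$. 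Now pick any $y=\divergence g\in\partial J(u)$ with $g\in\mathcal{B}_{1,u}$. Since $g$ and $h$ agree off $E_{0}$,
\[
	y-x=\divergence(g-h)=\sum_{e\in E_{0}}(g_{e}-h_{e})\,\divergence\delta_{e},
\]
and for each $e\in E_{0}$ the coefficient $g_{e}-h_{e}$ is either $0$, or positive (which forces $h_{e}<1$, so $\divergence\delta_{e}\in S_{x}$), or negative (which forces $h_{e}>-1$, so $-\divergence\delta_{e}\in S_{x}$). Hence $y-x$ is a nonnegative combination of vectors of $S_{x}$, i.e.\ $y\in x+K_{x}$. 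As $y\in\partial J(u)$ was arbitrary, $\partial J(u)\subseteq x+K_{x}$, which is the special cone property, and Theorem \ref{thm:phimin} yields the claim.

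I do not expect a genuine obstacle: once the identity $\divergence\delta_{(v_{i},v_{j})}=e_{v_{j}}-e_{v_{i}}$ is isolated, the argument reduces to the elementary fact that the admissible perturbations of a point inside a box are exactly the coordinate directions whose associated constraint is slack. The only points demanding a little care are bookkeeping: correctly identifying $E_{0}$ as the set of free coordinates of $\mathcal{B}_{1,u}$ and matching the sign convention in the definition of $S_{x}$. As an alternative one could instead verify condition 3 of Theorem \ref{thm:phimin}: every face of the polytope $\divergence\mathcal{B}_{1,u}$ is the image under $\divergence$ of a face of the box $\mathcal{B}_{1,u}$, such a face is itself a sub-box whose affine hull is a translate of a coordinate subspace $\operatorname{span}\{\delta_{e}:e\in E'\}$, and since the linear map $\divergence$ carries affine hulls to affine hulls, the affine hull of the corresponding face of $\partial J(u)$ is a translate of $\operatorname{span}\{e_{v_{j}}-e_{v_{i}}:(v_{i},v_{j})\in E'\}$ — exactly a shifted subspace spanned by vectors of type $e_{i}-e_{j}$.
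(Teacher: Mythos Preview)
Your proof is correct and takes a genuinely different route from the paper's. The paper proceeds in two stages: first it cites an external result (\cite[Thm.\ 2.4, Rem.\ 2.5]{Kruglyak3}) to establish that $\partial J(0)=\divergence\mathcal{B}_{1}$ has the special cone property, and then for general $u$ it argues via polytope face theory, observing that $\partial J(u)$ is a face of the polytope $\partial J(0)$ and that faces of faces are faces (\cite[Chap.\ 3.1, Thm.\ 5]{Grunbaum2}), whence statement~3 of Theorem~\ref{thm:phimin} carries over. Your argument is more direct and self-contained: you treat $\partial J(u)$ for arbitrary $u$ uniformly, exploiting that $\mathcal{B}_{1,u}$ is always a box and that $\divergence$ sends coordinate directions to vectors $e_{v_j}-e_{v_i}$, so the special cone property is inherited from the obvious one for boxes. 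This buys you a proof that needs neither the external citation nor the polytope-face machinery, and it makes transparent \emph{why} the result holds---namely because the subdifferential is the linear image of a box under a map with the right coordinate structure. The paper's approach, on the other hand, emphasises the hierarchical relationship $\partial J(u)\subseteq\partial J(0)$ and connects more explicitly to the face characterisation, which is conceptually useful elsewhere (e.g.\ in Section~\ref{sec:submodular}). Your brief sketch of the alternative via condition~3 is essentially the paper's second step, carried out directly rather than by reduction to $\partial J(0)$.
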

\begin{proof}
Consider first $\partial J(0)$. In \cite[Thm.\ 2.4, Rem.\ 2.5]{Kruglyak3} it is established that the bounded, closed and convex set $\divergence\mathcal{B}_{\alpha}\subset\mathbb{R}^{V}$ is invariant $\varphi$-minimal by showing that it has the special cone property. It follows that $\partial J(0)=\divergence\mathcal{B}_{1}$ is an invariant $\varphi$-minimal set.
\par Take next a general $u\in \mathbb{R}^{V}$. We have $\partial J(u)=H\cap\partial J(0)$ where $H=\{u^{*}\in\mathbb{R}^{V}:\langle u^{*},u\rangle_{\mathbb{R}^{V}}=J(u)\}$, recall Lemma \ref{L1}. Consider the halfspace $\widehat{H}=\{u^{*}\in\mathbb{R}^{V}:\langle u^{*},u\rangle_{\mathbb{R}^{V}}\leq J(u)\}$ with boundary $H$. Note that (i) $\partial J(0)\subset\widehat{H}$, (ii) $H\cap\partial J(0)\neq\emptyset$ and (iii) $\partial J(0)$ is a convex polytope. So, $H$ is a supporting hyperplane of $\partial J(0)$ and $\partial J(u)$ is a face of $\partial J(0)$ and itself a convex polytope. Further, every face of $\partial J(u)$ is a face of $\partial J(0)$. This follows from a general result on faces of convex polytopes, see e.g. \cite[Chap.\ 3.1, Thm.\ 5]{Grunbaum2}. Therefore $\partial J(u)$ satisfies statement 3 in Theorem \ref{thm:phimin}.
\end{proof}
\begin{remark}
 As $\partial J(u)$ is an invariant $\varphi$-minimal set, it follows that the minimal section $\partial^\circ J(u)$ not only has minimal $\ell^2$-norm in $\partial J(u)$, but satisfies
 \begin{align*}
  \sum_{v\in V}\varphi(\partial^\circ J(u)(v))=\underset{u^{*}\in\partial J(u)}{\min}\sum_{v\in V}\varphi(u^{*}(v))
 \end{align*}
for every convex function $\varphi:\mathbb{R}\rightarrow\mathbb{R}$.
\end{remark}

\subsection{Invariant $\varphi$-minimal sets and submodular functions} \label{sec:submodular}
To conclude this section, we present an interesting connection between submodular functions and invariant $\varphi$-minimal sets. Submodular functions play an important role in combinatorial optimization, similar to that of convex functions in continuous optimization. See \cite{Bach13,Fujishige1} for more details.
\par Let $S=\left\{1,...,n\right\}$. A set function $g:2^{S}\rightarrow\mathbb{R}$ is \emph{submodular} if
 \begin{align*}
  g(A)+g(B)\geq g(A\cup B)+g(A\cap B)
 \end{align*}
for all sets $A,B\subset S$. Given a submodular function $g$, assuming $g(\emptyset)=0$, the associated \emph{submodular polyhedron} $P(g)$ and \emph{base polyhedron} $B(g)$ are defined by
\begin{align*}
 P(g)=\left\{x\in\mathbb{R}^{n}:\forall A\subset S,\sum_{i\in A}x_{i}\leq g(A)\right\},
\end{align*}
\begin{align*}
 B(g)=\left\{x\in P(g):\sum_{i\in S}x_{i}=g(S)\right\}.
\end{align*}
Note that $B(g)$ is a bounded set and therefore a convex polytope. In the plane we can easily visualize submodular and base polyhedra, see Figure \ref{fig:invphimin}, right panel, for an example.

\par Define the \emph{tangent cone} $T_{P}(x)$ of a convex polytope $P\subset\mathbb{R}^{n}$ at $x\in P$ by
\begin{align*}
 T_{P}(x)=\left\{\lambda z:\lambda\geq 0,x+z\in P\right\}.
\end{align*}
N. Tomizawa characterized, see \cite[Thm. 17.1]{Fujishige1}, base polyhedra according to
\begin{theorem} \label{thm:basepoly}
 A convex polytope $P\subset\mathbb{R}^{n}$ is a base polyhedron if and only if for all $x\in P$, the tangent cone $T_{P}(x)$ is generated by vectors of the type $e_{i}-e_{j}$, $i\neq j$.
\end{theorem}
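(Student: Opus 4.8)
The result is N.\ Tomizawa's characterization, quoted above from \cite{Fujishige1}; here is how I would reconstruct a self-contained proof. Throughout write $x(A)=\sum_{i\in A}x_{i}$ for $x\in\mathbb{R}^{n}$ and $A\subseteq S$, and, for $x\in B(g)$, call a set $A$ \emph{tight} at $x$ if $x(A)=g(A)$; note $\emptyset$ and $S$ are always tight.

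\textbf{Necessity.} Assume $P=B(g)$ with $g$ submodular. Since $x+\varepsilon z\in B(g)$ for small $\varepsilon>0$ precisely when $z(S)=0$ and $z$ does not violate the finitely many currently tight inequalities, one first gets $T_{P}(x)=\{z:z(S)=0,\ z(A)\leq 0\text{ for every tight }A\}$. The classical uncrossing argument---the point where submodularity is used---then shows the tight sets form a family closed under union and intersection, so each $i$ has a smallest tight superset $\hat{A}_{i}$, and one verifies that $e_{i}-e_{j}\in T_{P}(x)$ if and only if $j\in\hat{A}_{i}$. To finish, I would show every $z\in T_{P}(x)$ lies in the cone $K$ generated by these vectors: otherwise Farkas gives $w$ with $\langle w,z\rangle>0$ and $w_{i}\leq w_{j}$ whenever $j\in\hat{A}_{i}$; but then every super-level set $\{i:w_{i}\geq t\}$ of $w$ is a union of sets $\hat{A}_{i}$, hence tight, so a telescoping (Abel) summation over the level sets of $w$, using $z(S)=0$ and $z(A)\leq0$ on tight sets, forces $\langle w,z\rangle\leq0$, a contradiction.

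\textbf{Sufficiency.} Assume every tangent cone of $P$ is generated by vectors $e_{i}-e_{j}$. These are orthogonal to $\mathbb{1}_{S}$ and the direction of any segment in $P$ lies in a tangent cone, so $x(S)$ is constant on $P$, say $x(S)=c$. I would put $g(A)=\max_{x\in P}x(A)$, so $g(\emptyset)=0$, $g(S)=c$, and prove the \emph{chain-exchange lemma}: for any chain $C_{1}\subseteq\cdots\subseteq C_{r}$ of subsets of $S$ there is a single $x^{*}\in P$ with $x^{*}(C_{s})=g(C_{s})$ for all $s$. By induction on $r$: given $x^{*}$ realizing $g(C_{1}),\dots,g(C_{r-1})$, maximize $x(C_{r})$ over the nonempty face $F$ of $P$ cut out by those equalities; if the maximum were below $g(C_{r})$, choose $y\in P$ with $y(C_{r})=g(C_{r})$, expand $y-x^{*}\in T_{P}(x^{*})$ in generators $e_{i}-e_{j}$, observe that a generator with $\langle\mathbb{1}_{C_{r}},e_{i}-e_{j}\rangle>0$ must be of the form $e_{p}-e_{q}$ with $p\in C_{r}$ and $q\notin C_{r}$, and use that every generator is non-increasing for each $x(C_{s})$ with $s<r$ together with $q\notin C_{s}$ to conclude $p\notin C_{s}$ for $s<r$; then a small move from $x^{*}$ along $e_{p}-e_{q}$ stays in $F$ but raises $x(C_{r})$, a contradiction. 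Submodularity of $g$ is then immediate from the chain $A\cap B\subseteq A\cup B$: the resulting $x^{*}$ gives $g(A)+g(B)\geq x^{*}(A)+x^{*}(B)=x^{*}(A\cup B)+x^{*}(A\cap B)=g(A\cup B)+g(A\cap B)$. For $P=B(g)$, the inclusion $P\subseteq B(g)$ is immediate from the definition of $g$ and $x(S)=c$; conversely, if $z(S)=c$ and $z(A)\leq g(A)$ for all $A$, then for any $w$---which we may take nonnegative after adding a multiple of $\mathbb{1}_{S}$---the super-level sets of $w$ form a chain, so the lemma provides $x^{*}\in P$ with $\langle w,x^{*}\rangle\geq\langle w,z\rangle$ by telescoping, whence $z$ satisfies every supporting inequality of $P$ and lies in $P$.

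The two steps where the hypotheses genuinely enter---and thus the expected obstacles---are the uncrossing/lattice structure of the tight sets in the necessity part and the chain-exchange lemma in the sufficiency part; the remainder is the bookkeeping of Abel summation, essentially the greedy-algorithm identity for base polyhedra.
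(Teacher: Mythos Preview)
The paper does not prove this theorem at all: it is stated as Tomizawa's characterization and simply cited from \cite[Thm.~17.1]{Fujishige1}, then used as a black box in the proof of Proposition~\ref{thm:equivbasephi}. So there is no ``paper's own proof'' to compare against.

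That said, your reconstruction is a sound outline of the standard argument. In the necessity direction, the uncrossing step (tight sets are closed under union and intersection by submodularity), the identification of the admissible exchange directions $e_i-e_j$ via the minimal tight supersets $\hat A_i$, and the Farkas/Abel-summation step are all correct; the key observation that super-level sets of the separating $w$ are unions of sets $\hat A_i$ and hence tight is exactly what makes the telescoping work. In the sufficiency direction, your chain-exchange lemma is the right engine, and your induction step is correct once one reads your $x^*$ as the maximizer of $x(C_r)$ over the face $F$ (you tacitly reassign the symbol there; it would be cleaner to give the new point a different name). The deduction of submodularity from the two-element chain $A\cap B\subseteq A\cup B$ and the final inclusion $B(g)\subseteq P$ via the Abel-summation/greedy identity are fine. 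One small point worth making explicit: the generators you pick are elements of $T_P(x^*)$, so the move $x^*+\varepsilon(e_p-e_q)$ really lands in $P$; combined with $(e_p-e_q)(C_s)=0$ for $s<r$ this is what keeps you in $F$.
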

With this characterization at hand, the connection between invariant $\varphi$-minimal sets and submodular functions can be revealed.
\begin{proposition} \label{thm:equivbasephi}
 A bounded, closed and convex set $\Omega\subset\mathbb{R}^{n}$ is invariant $\varphi$-minimal if and only if it is a base polyhedron associated to a submodular function $g:2^{S}\rightarrow\mathbb{R}$.
\end{proposition}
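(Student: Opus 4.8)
The plan is to reduce the proposition to the two characterizations already in hand: Theorem~\ref{thm:phimin}, by which a bounded, closed, convex set is invariant $\varphi$-minimal if and only if it has the special cone property, and Tomizawa's Theorem~\ref{thm:basepoly}, by which a convex polytope is a base polyhedron if and only if its tangent cone at every point is generated by vectors of the type $e_i - e_j$. An invariant $\varphi$-minimal set is a convex polytope by statement~3 of Theorem~\ref{thm:phimin}, and a base polyhedron $B(g)$ is bounded, hence also a convex polytope; so in both directions we may assume $\Omega$ is a convex polytope. It then suffices to prove the bridging equivalence: for a convex polytope $\Omega\subset\mathbb{R}^n$, the special cone property holds if and only if $T_\Omega(x)$ is generated by vectors $e_i-e_j$ for every $x\in\Omega$.

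To prove this equivalence I would first note two facts valid at any $x\in\Omega$: the convex cone $K_x$ is contained in $T_\Omega(x)$ (each generator $y\in S_x$ satisfies $x+\beta y\in\Omega$ for some $\beta>0$, so $y\in T_\Omega(x)$, and $T_\Omega(x)$ is a convex cone since $\Omega$ is convex), and $\Omega\subseteq x+T_\Omega(x)$ (trivially, since $x+(y-x)=y\in\Omega$ for $y\in\Omega$). Now assume the special cone property, i.e. $\Omega\subseteq x+K_x$. Any $\lambda z\in T_\Omega(x)$ has $\lambda\ge 0$ and $x+z\in\Omega$, so $z\in K_x$, and as $K_x$ is a cone, $\lambda z\in K_x$; hence $T_\Omega(x)\subseteq K_x$, and together with the reverse inclusion $T_\Omega(x)=K_x$ is generated by vectors $e_i-e_j$. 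Conversely, assume $T_\Omega(x)$ is generated by such vectors. A generator $w=e_i-e_j$ with $i\ne j$ is nonzero and lies in $T_\Omega(x)$, so $w=\lambda z$ with $\lambda\ge 0$ and $x+z\in\Omega$; since $w\ne 0$ we get $\lambda>0$, whence $x+\lambda^{-1}w\in\Omega$ and $w\in S_x$. Thus every generator of $T_\Omega(x)$ lies in the convex cone $K_x$, so $T_\Omega(x)\subseteq K_x$, and then $\Omega\subseteq x+T_\Omega(x)\subseteq x+K_x$, which is the special cone property.

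Assembling the chain: if $\Omega$ is invariant $\varphi$-minimal, then by Theorem~\ref{thm:phimin} it is a convex polytope with the special cone property, by the bridging equivalence its tangent cones are generated by $e_i-e_j$ vectors, and by Theorem~\ref{thm:basepoly} it is the base polyhedron of some submodular $g:2^S\to\mathbb{R}$; the converse runs backwards along the same chain. I do not anticipate a serious obstacle: the content is entirely in reconciling the two slightly differently phrased cone conditions — the special cone property written as a containment $\Omega\subseteq x+K_x$ versus Tomizawa's condition written as ``$T_\Omega(x)$ is generated by $e_i-e_j$'' — together with the harmless observation that the degenerate vector $e_i-e_i=0$ plays no role since $i\ne j$ is enforced. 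One small point worth stating explicitly is that a base polyhedron lies in the hyperplane $\{\sum_i x_i = g(S)\}$; this is automatically consistent with the invariant $\varphi$-minimal side because the affine hull of such a set, being a shifted subspace spanned by vectors orthogonal to $(1,\dots,1)$, is always a hyperplane of this form.
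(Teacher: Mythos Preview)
Your proposal is correct and follows essentially the same route as the paper's proof: both combine Theorem~\ref{thm:phimin} (invariant $\varphi$-minimal $\Leftrightarrow$ special cone property) with Tomizawa's Theorem~\ref{thm:basepoly} via the bridging equivalence that the special cone property holds if and only if every tangent cone $T_\Omega(x)$ is generated by vectors $e_i-e_j$. The paper dismisses this bridging step as ``straightforward to derive''; you have simply written it out in full, and your argument for it is sound.
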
 
\begin{proof}
 Recall from Theorem \ref{thm:phimin} that $\Omega$ is invariant $\varphi$-minimal if and only if it has the special cone property. Next, it is straightforward to derive that $\Omega$ has the special cone property if and only if the tangent cone $T_{\Omega}(x)$, for every $x\in\Omega$, is generated by vectors of the type $e_{i}-e_{j}$, $i\neq j$. This is precisely the characterization of a base polyhedron as given by Theorem \ref{thm:basepoly}.
\end{proof}
\begin{remark}
Figure \ref{fig:invphimin} illustrates the equivalence of invariant $\varphi$-minimal sets and base polyhedra. Note that the subdifferential $\partial J(0)$ is the base polyhedron $B(g)$ associated to the cut function $g$ on the graph, see \cite[Sec.\ 6.2]{Bach13}.
\end{remark}

\section{The ROF model on the graph}\label{sec:rof}
With the graph setting introduced, we now turn to an analogue of the ROF image denoising model on $\mathbb{R}^{V}$. Given $f\in \mathbb{R}^{V}$ and $\alpha \ge 0$ we consider the following minimization problem:
\begin{align} \label{ROFG}
 \underset{u\in \mathbb{R}^{V}}{\min} \frac{1}{2}\|f-u\|^{2}_{{2}}+\alpha J(u).
\end{align}
Throughout this article the unique solution to \eqref{ROFG} will be denoted by $u_\alpha$.

\subsection{Dual formulation and an invariance property of the ROF minimizer} \label{S31}
The next proposition remains true, if $J$ is replaced by the support function of an arbitrary closed and convex subset of $\mathbb{R}^V.$
\begin{proposition}\label{thm:rofconstrained}
For every $f\in \mathbb{R}^{V}$ and $\alpha \ge 0$ problem \eqref{ROFG} is equivalent to
\begin{align} \label{206}
\underset{u\in f - \alpha \partial J(0)}{\min}\|u\|_{{2}}.
\end{align}
\end{proposition}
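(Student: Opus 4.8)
The plan is to dualize the ROF functional using Fenchel–Rockafellar duality and identify the dual problem with a projection onto the set $f - \alpha \partial J(0)$. Write the objective in \eqref{ROFG} as $F(u) + \alpha J(u)$ with $F(u) = \tfrac12\|f-u\|_2^2$. Since $J$ is the support function of the compact convex set $\divergence \mathcal{B}_1 = \partial J(0)$ (Definition \ref{def:tv} and item~1 of Lemma \ref{L1}), and $\alpha J$ is the support function of $\alpha \partial J(0)$, its convex conjugate $(\alpha J)^*$ is the indicator function $\iota_{\alpha \partial J(0)}$ of that set. The conjugate of $F$ is $F^*(p) = \tfrac12\|p\|_2^2 + \langle f, p\rangle_{\mathbb{R}^V}$ (a standard computation, completing the square). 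Fenchel duality then gives the dual problem
\begin{align*}
	\max_{p \in \mathbb{R}^V} \left( -F^*(-p) - (\alpha J)^*(p) \right) = \max_{p \in \alpha \partial J(0)} \left( -\tfrac12\|p\|_2^2 + \langle f, p\rangle_{\mathbb{R}^V} \right),
\end{align*}
with strong duality and attainment holding because $F$ is finite and continuous everywhere and $\alpha J$ is finite-valued (Rockafellar's qualification condition is trivially satisfied). Completing the square, $-\tfrac12\|p\|_2^2 + \langle f,p\rangle = \tfrac12\|f\|_2^2 - \tfrac12\|f-p\|_2^2$, so the dual is equivalent to minimizing $\|f-p\|_2^2$ over $p \in \alpha\partial J(0)$; the primal–dual optimality relation $u_\alpha = f - p^*$ (from $u_\alpha = \nabla F^*(-p^*) = f - p^*$, i.e. $p^* \in \partial(\alpha J)(u_\alpha)$ and $f - u_\alpha = p^*$) then shows that the primal minimizer is exactly the point of $f - \alpha\partial J(0)$ nearest the origin, which is \eqref{206}.

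Alternatively — and perhaps more transparently for the paper — one can argue directly at the level of optimality conditions without invoking the duality machinery by name. The point $u_\alpha$ solves \eqref{ROFG} if and only if $0 \in \partial\big(\tfrac12\|f-\cdot\|_2^2 + \alpha J\big)(u_\alpha)$, i.e. $f - u_\alpha \in \alpha\,\partial J(u_\alpha) \subset \alpha\,\partial J(0)$, where the inclusion $\partial J(u_\alpha) \subset \partial J(0)$ is item~2 of Lemma \ref{L1}. Hence $u_\alpha \in f - \alpha\partial J(0)$. Conversely, letting $\hat u$ denote the (unique) minimal-$\ell^2$-norm element of the compact convex set $f - \alpha\partial J(0)$, the projection characterization gives $\langle \hat u, w - \hat u\rangle_{\mathbb{R}^V} \ge 0$ for all $w$ in that set; writing $w = f - p$, $\hat u = f - \hat p$ with $p, \hat p \in \alpha\partial J(0)$ this reads $\langle f - \hat p,\, \hat p - p\rangle_{\mathbb{R}^V} \ge 0$ for all $p \in \alpha\partial J(0)$, i.e. $f - \hat u = \hat p$ maximizes $\langle f - \hat u, \cdot\rangle$ over $\alpha\partial J(0)$, so $\hat p \in \alpha\,\partial J(\hat u)$ by item~2 of Lemma \ref{L1}, which is precisely the optimality condition $0 \in \partial\big(\tfrac12\|f-\cdot\|_2^2 + \alpha J\big)(\hat u)$. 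By uniqueness of the ROF minimizer, $u_\alpha = \hat u$, establishing the equivalence.

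I expect the main obstacle to be the smooth bookkeeping of the variational/optimality inequalities — in particular verifying cleanly that "$f - \hat u$ attains the supremum of $\langle f - \hat u, \cdot\rangle$ over $\alpha\partial J(0)$" is equivalent, via Lemma \ref{L1}(2) and $1$-homogeneity, to "$f - \hat u \in \alpha\partial J(\hat u)$", and then reading this back as the subdifferential optimality condition for \eqref{ROFG}. The convex-analysis facts themselves (conjugate of a quadratic, conjugate of a support function being an indicator, existence/uniqueness of the projection onto a nonempty compact convex set) are routine and can be cited; the care needed is only in matching signs and in invoking that $J$, being a finite-valued convex function, has the subdifferential sum rule $\partial(F + \alpha J) = \partial F + \alpha\partial J$ at $u_\alpha$ with no constraint qualification issues since $F$ is everywhere differentiable.
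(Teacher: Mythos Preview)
Your first approach via Fenchel duality is essentially identical to the paper's proof: the paper writes down the dual problem, computes $(\alpha J)^*$ as the indicator of $\alpha\partial J(0)=\divergence\mathcal{B}_\alpha$, identifies the dual as the projection $u^*_\alpha=\argmin_{u^*\in\alpha\partial J(0)}\|f-u^*\|_2$, and then uses the primal--dual relation $u_\alpha=f-u^*_\alpha$ to obtain \eqref{206}. In your alternative direct argument there is a small slip: the inequality $\langle f-\hat p,\hat p-p\rangle\ge 0$ says that $\hat p$ maximizes $\langle \hat u,\cdot\rangle$ (not $\langle f-\hat u,\cdot\rangle$) over $\alpha\partial J(0)$, which is exactly what Lemma~\ref{L1}(2) requires for $\hat p\in\alpha\partial J(\hat u)$, so with this correction the argument is fine.
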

\begin{proof}
The corresponding dual problem of \eqref{ROFG} can be expressed as
\begin{align} \label{ROFGD}
 \underset{u^*\in \mathbb{R}^{V}}{\min} \frac{1}{2}\|f-u^*\|^{2}_{{2}}+(\alpha J)^{*}(u^*),
\end{align}
where $(\alpha J)^{*}$ denotes the convex conjugate of $\alpha J$. For general results underlying the derivation of \eqref{ROFGD} and the optimality conditions \eqref{OC1} below, see \cite[Chap. III, Prop. 4.1, Rem. 4.2]{Ekeland2}. Let $u_{\alpha}$ and $u^*_{\alpha}$ denote solutions to the primal problem \eqref{ROFG} and the dual problem \eqref{ROFGD} respectively. The optimality conditions are
\begin{equation}\label{OC1}
	\begin{aligned}
		u^*_{\alpha}	&\in\partial(\alpha J)(u_{\alpha})=\alpha\partial J(u_{\alpha}) \\
		u_{\alpha}	&=f-u^*_{\alpha}.
	\end{aligned}
\end{equation}
As $\alpha J(u)$ is the support function of $\divergence\mathcal{B}_{\alpha} = \alpha \partial J(0)$, its convex conjugate $(\alpha J)^{*}$ is given by
\begin{equation*}
\begin{aligned}
(\alpha J)^{*}(u^*)=
\left\{
\begin{array}{lr}
0,			& u^* \in \alpha \partial J(0),\\
+\infty,	& u^* \notin \alpha \partial J(0).
\end{array}
\right.
\end{aligned}
\end{equation*}
Taking into account the characterization of $(\alpha J)^{*}$ in the dual formulation \eqref{ROFGD} yields
\begin{align*}
 u^*_{\alpha} = \underset{u^*\in\alpha \partial J(0)}{\argmin}\|f-u^*\|_{2}.
\end{align*}
That is, $u^*_{\alpha}$ is the orthogonal projection of $f$ onto the closed and convex set $\divergence\mathcal{B}_{\alpha}$. For $u_{\alpha}$ we now obtain using \eqref{OC1} that
\begin{align*}
	\left\|u_{\alpha}\right\|_{{2}}=\|f-u^*_{\alpha}\|_{{2}}
		= \underset{u^*\in\alpha \partial J(0)}{\min}\left\|f-u^*\right\|_{{2}}
		= \underset{u\in f-\alpha \partial J(0)}{\min}\left\|u\right\|_{{2}}.
\end{align*}
\end{proof}
\begin{theorem} \label{T1}
The ROF minimizer $u_{\alpha}$ satisfies
 \begin{align} \label{TVRP}
 \sum_{v\in V}\varphi(u_{\alpha}(v))=\underset{u\in f-\alpha\partial J(0)}{\min}\sum_{v\in V}\varphi(u(v))
\end{align}
for every convex function $\varphi:\mathbb{R}\rightarrow\mathbb{R}$.
\end{theorem}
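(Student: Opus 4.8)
The plan is to combine the constrained reformulation of the ROF problem from Proposition~\ref{thm:rofconstrained} with the fact that $\partial J(0) = \divergence\mathcal{B}_1$ is invariant $\varphi$-minimal (Theorem~\ref{thm:subdifphimin}). First I would observe that the feasible set $\Omega := f - \alpha\partial J(0)$ is a translate of the rescaled set $-\alpha\,\partial J(0)$, and that the class of invariant $\varphi$-minimal sets is easily seen to be closed under translation and under scalar multiplication: if $\Omega_0$ is invariant $\varphi$-minimal, then for any $c\in\mathbb{R}^V$ and $\lambda\in\mathbb{R}$ the set $c + \lambda\Omega_0$ is again invariant $\varphi$-minimal, because for fixed $a$ one applies the defining property of $\Omega_0$ to the point $a' = \sgn(\lambda)\lambda^{-1}(a-c)$ (when $\lambda\ne 0$; the case $\lambda=0$ is trivial) together with the convex function $\tilde\varphi(x)=\varphi(\lambda x)$, which is again convex. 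Hence $\Omega = f - \alpha\,\partial J(0)$ is invariant $\varphi$-minimal.

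Next, unwinding Definition~\ref{def:invphimin} with the choice $a = 0 \in \mathbb{R}^V$: there exists an element $x_0 \in \Omega$ such that
\begin{align*}
	\sum_{v\in V}\varphi(x_0(v)) \le \sum_{v\in V}\varphi(u(v))
\end{align*}
for all $u\in\Omega$ and all convex $\varphi:\mathbb{R}\to\mathbb{R}$. In other words, $\Omega$ contains a single universal minimizer $x_0$ of $u\mapsto\sum_v\varphi(u(v))$ valid simultaneously for every convex $\varphi$. It then remains to identify $x_0$ with the ROF minimizer $u_\alpha$. For this I would specialize to $\varphi(x) = x^2$: by Proposition~\ref{thm:rofconstrained}, $u_\alpha$ is the unique minimizer of $\|u\|_2$ over $\Omega$, equivalently of $\|u\|_2^2 = \sum_v \varphi(u(v))$ over $\Omega$; since $x_0$ also minimizes this strictly convex functional over the convex set $\Omega$ and the minimizer is unique, we get $x_0 = u_\alpha$. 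Substituting back into the displayed inequality gives exactly \eqref{TVRP}.

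The only mild subtlety — and the step I would be most careful about — is making sure the universal minimizer supplied by Definition~\ref{def:invphimin} is genuinely the \emph{same} element for all $\varphi$, so that pinning it down with one particular $\varphi$ (here $\varphi(x)=x^2$) legitimately determines it for all others; this is built into the definition (the element $x_a$ does not depend on $\varphi$), so no real obstacle arises, but it is worth stating explicitly. A secondary point is the closure of the invariant $\varphi$-minimal property under the affine change of variables $u\mapsto f-\alpha u$; this is elementary but should be recorded, perhaps as a one-line remark, since the characterization via the special cone property (Theorem~\ref{thm:phimin}) also makes it transparent: the generating vectors $e_i - e_j$ are unchanged by translation and only scaled by $\alpha$. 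With these two observations in place the proof is short: invariance under affine maps, extract $x_0$ from the definition at $a=0$, identify $x_0 = u_\alpha$ via strict convexity and Proposition~\ref{thm:rofconstrained}, conclude \eqref{TVRP}.
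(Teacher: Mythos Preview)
Your proof is correct and follows essentially the same route as the paper: combine Proposition~\ref{thm:rofconstrained} with the invariant $\varphi$-minimality of $\partial J(0)$ from Theorem~\ref{thm:subdifphimin}, then pin down the universal minimizer as $u_\alpha$ via the strictly convex choice $\varphi(x)=x^2$. The only cosmetic difference is that the paper applies Definition~\ref{def:invphimin} directly to $\alpha\partial J(0)$ with the translation parameter $a=f$ (so the closure under translation is already absorbed into the definition), whereas you first translate the set to $f-\alpha\partial J(0)$ and then take $a=0$; both are equivalent and your explicit remark on closure under affine maps is a welcome clarification of what the paper leaves implicit.
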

\begin{proof}
According to Theorem \ref{thm:subdifphimin} the set $\alpha\partial J(0)$ is invariant $\varphi$-minimal, recall Definition \ref{def:invphimin}. From the above derivation of the dual formulation, we know that $u_{\alpha}$ is the $\ell^{2}$-minimizer in the set $f-\alpha\partial J(0)$. Taken together, this gives \eqref{TVRP}.
\end{proof}
While Proposition \ref{thm:rofconstrained} is valid for every support function of a closed and convex set, Theorem \ref{T1} fails in this more general case. The following remark discusses this failure for the so-called discrete isotropic total variation.
\begin{remark}\label{rem:isorof}
Let $G=(V,E)$ be an $M\times N$ Cartesian graph, as illustrated in Figure \ref{F1}. On such graphs the following variant of $J$ has been a popular choice, in particular for image processing applications
\begin{equation*}
\begin{aligned} 
J_{\mathrm{iso}}(u)=\sum^{N-1}_{j=1}\sum^{M-1}_{i=1}\sqrt{\left|u(v_{i+1,j})-u(v_{i,j})\right|^{2}+\left|u(v_{i,j+1})-u(v_{i,j})\right|^{2}}+\\
\sum^{M-1}_{i=1}\left|u(v_{i+1,N})-u(v_{i,N})\right|+\sum^{N-1}_{j=1}\left|u(v_{M,j+1})-u(v_{M,j})\right|,
\end{aligned}
\end{equation*}
see, for instance, \cite{AujAubBlaCha05,AujGilChaOsh06,Chambolle2}. It can be shown that $J_{\mathrm{iso}}$ is the support function of $\divergence \mathcal{B}^{\text{iso}}_{1}$, where
\begin{align*}
	\mathcal{B}^{\text{iso}}_{1} = \left\{H\in \mathbb{R}^{E}: \max_{i,j} C_{ij}(H) \leq 1 \right\},
\end{align*}
and $C_{ij}(H)$ is given by
\begin{equation*}
	C_{ij}(H)=
 	\begin{cases}
 		\sqrt{H((v_{i+1,j},v_{i,j}))^{2}+H((v_{i,j+1},v_{i,j}))^{2}},	& i \le M-1 ,\; j \le N-1,\\
 		|H((v_{i+1,N},v_{i,N}))|,	& i \le M-1,\; j=N, \\
		|H((v_{M,j+1},v_{M,j}))|,	& i=M,\; j \le N-1,\\
		0, & i=M,\; j=N.
 	\end{cases} 
\end{equation*}
Let $M,N>1$. From the construction of $\mathcal{B}^{\text{iso}}_{1}$ it follows that $\partial J_{\text{iso}} (0) = \divergence \mathcal{B}^{\text{iso}}_{1}$ is not a polytope and therefore, by Theorem \ref{thm:phimin}, it cannot be invariant $\varphi$-minimal. Consequently, the minimizer of the isotropic ROF model, which can be characterized as
	$$ \argmin_{u\in f-\alpha\partial J_{\text{iso}}(0)} \|u\|_2,$$
in general does not have property \eqref{TVRP}.
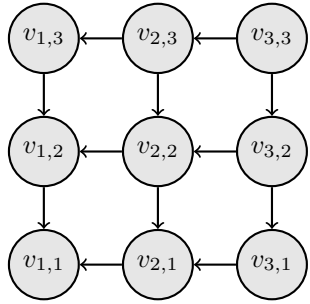
\begin{figure}[h]
\centering
 \begin{tikzpicture}[scale=0.75]
\tikzstyle{every node}=[draw, circle, thick, fill=gray!20, minimum width={width("fjhere")},]

	\node       	 (a) at (0,0)  {$v_{1,2}$};
	\node            (b) at (2,0)  {$v_{2,2}$};
	\node            (c) at (4,0)  {$v_{3,2}$};
	\node            (d) at (2,2)  {$v_{2,3}$};
	\node            (e) at (2,-2) 	{$v_{2,1}$};
	\node            (f) at (0,2)  {$v_{1,3}$};
	\node            (g) at (0,-2) 	{$v_{1,1}$};
	\node            (h) at (4,-2) 	{$v_{3,1}$};
	\node            (i) at (4,2)  {$v_{3,3}$};
	\draw[<-,thick] (a) edge (b);
	\draw[<-,thick] (b) edge (c);
	\draw[<-,thick] (b) edge (d);
	\draw[<-,thick] (e) edge (b);
	\draw[<-,thick] (a) edge (f);
	\draw[<-,thick] (g) edge (a);
	\draw[<-,thick] (f) edge (d);
	\draw[<-,thick] (g) edge (e);
	\draw[<-,thick] (d) edge (i);
	\draw[<-,thick] (c) edge (i);
	\draw[<-,thick] (h) edge (c);
	\draw[<-,thick] (e) edge (h);
\end{tikzpicture}
\caption{A $3 \times 3$ Cartesian graph.}
\label{F1}
\end{figure}
\end{remark}
\begin{remark}
In the continuous setting it is known that an analogue of Theorem \ref{T1} holds for isotropic total variation, see \cite[Thm.\ 4.46]{Scherzer1}.
\end{remark}

\subsection{Further properties of the ROF minimizer} \label{S3}
In this subsection we study further properties of the ROF minimizer $u_{\alpha}$. We first give an auxiliary result.
\begin{lemma} \label{thm:convexcombination}
 Let $0\leq\beta_{1}<\beta_{2}$. If $\partial J(u_{\beta_{1}})=\partial J(u_{\beta_{2}})$, then for every $\alpha \in (\beta_1,\beta_2)$ the ROF minimizer $u_\alpha$ is a convex combination of $u_{\beta_1}$ and $u_{\beta_2}$. That is,
 \begin{align} \label{u(alpha)}
   u_{\alpha} = \frac{\beta_{2}-\alpha}{\beta_{2}-\beta_{1}}u_{\beta_{1}} + \frac{\alpha-\beta_{1}}{\beta_{2}-\beta_{1}}u_{\beta_{2}},\quad \beta_{1} < \alpha < \beta_{2}.
 \end{align}
\end{lemma}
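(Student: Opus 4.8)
The plan is to use the characterization of the ROF minimizer from Proposition \ref{thm:rofconstrained} together with the optimality conditions \eqref{OC1}, and exploit the hypothesis $\partial J(u_{\beta_1}) = \partial J(u_{\beta_2})$ via Remark \ref{rem:subdifequal1}. Write $u_\lambda = (1-\lambda) u_{\beta_1} + \lambda u_{\beta_2}$ with $\lambda = (\alpha - \beta_1)/(\beta_2 - \beta_1) \in (0,1)$, so that $\alpha = (1-\lambda)\beta_1 + \lambda \beta_2$; the goal is to show $u_\lambda = u_\alpha$. By uniqueness of the ROF solution it suffices to verify that $u_\lambda$ satisfies the optimality conditions, i.e. that there exists $u^*_\lambda$ with $u_\lambda = f - u^*_\lambda$ and $u^*_\lambda \in \alpha \partial J(u_\lambda)$.

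First I would record what the optimality conditions give at $\beta_1$ and $\beta_2$: there are $u^*_{\beta_1} \in \beta_1 \partial J(u_{\beta_1})$ and $u^*_{\beta_2} \in \beta_2 \partial J(u_{\beta_2})$ with $u_{\beta_i} = f - u^*_{\beta_i}$. Equivalently, writing $p_i := u^*_{\beta_i}/\beta_i \in \partial J(u_{\beta_i})$ (handling $\beta_1 = 0$ separately, where $u_0 = f$ and one takes $p_1$ to be any element of $\partial J(f)$, or rather one argues directly), the hypothesis $\partial J(u_{\beta_1}) = \partial J(u_{\beta_2}) =: D$ means both $p_1, p_2 \in D$. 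Now set
\begin{equation*}
	u^*_\lambda := f - u_\lambda = (1-\lambda) u^*_{\beta_1} + \lambda u^*_{\beta_2} = (1-\lambda)\beta_1 p_1 + \lambda \beta_2 p_2.
\end{equation*}
I want to show $u^*_\lambda \in \alpha \partial J(u_\lambda)$, i.e. $u^*_\lambda / \alpha \in \partial J(u_\lambda)$. Two facts combine here: (i) by Remark \ref{rem:subdifequal1}(2), since $\partial J(u_{\beta_1}) = \partial J(u_{\beta_2})$ and $u_\lambda$ is a convex combination of $u_{\beta_1}, u_{\beta_2}$, we have $\partial J(u_\lambda) = D$ as well; (ii) $D = \partial J(u_\lambda)$ is convex, so it remains to check that $u^*_\lambda/\alpha$ is a convex combination of $p_1$ and $p_2$, which it is precisely because $(1-\lambda)\beta_1 + \lambda\beta_2 = \alpha$: indeed $u^*_\lambda/\alpha = \frac{(1-\lambda)\beta_1}{\alpha} p_1 + \frac{\lambda\beta_2}{\alpha} p_2$ and the two coefficients are nonnegative and sum to $1$. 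Hence $u^*_\lambda/\alpha \in D = \partial J(u_\lambda)$, so $u^*_\lambda \in \alpha\partial J(u_\lambda)$, and together with $u_\lambda = f - u^*_\lambda$ this is exactly \eqref{OC1} for the parameter $\alpha$. By uniqueness of the ROF minimizer, $u_\alpha = u_\lambda$, which is \eqref{u(alpha)}.

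The one delicate point — and the place I expect to spend the most care — is the boundary case $\beta_1 = 0$, where the factor $p_1 = u^*_{\beta_1}/\beta_1$ is not defined as written. Here $u^*_0 = f - u_0$, and the correct reading of ``$u^*_0 \in 0\cdot\partial J(u_0)$'' is $u^*_0 = 0$, so $u_0 = f$; the hypothesis becomes $\partial J(f) = \partial J(u_{\beta_2})$. In that case $u^*_\lambda = \lambda u^*_{\beta_2} = \lambda \beta_2 p_2 = \alpha p_2$ (since $\lambda\beta_2 = \alpha$ when $\beta_1 = 0$), and $p_2 \in \partial J(u_{\beta_2}) = \partial J(f) = \partial J(u_\lambda)$, the last equality again by Remark \ref{rem:subdifequal1}(2). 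So $u^*_\lambda/\alpha = p_2 \in \partial J(u_\lambda)$ and the argument closes identically. Thus the whole proof reduces to: (1) invoke uniqueness, (2) invoke \eqref{OC1} to reduce to checking optimality of $u_\lambda$, (3) use Remark \ref{rem:subdifequal1}(2) to identify the subdifferential along the segment, and (4) a one-line convexity/arithmetic check that the candidate dual variable lies in $\alpha\partial J(u_\lambda)$; there are no substantial computations.
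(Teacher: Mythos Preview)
Your proof is correct and follows essentially the same approach as the paper: both verify the optimality condition \eqref{OC1} for the convex combination by invoking Remark~\ref{rem:subdifequal1}(2) to identify the subdifferential along the segment, then check that the candidate dual variable is a convex combination of $u^*_{\beta_1}/\beta_1$ and $u^*_{\beta_2}/\beta_2$ in the common set $\partial J(u_{\beta_1})$, with the case $\beta_1=0$ handled separately in the same way.
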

\begin{proof}
Denote the convex combination in \eqref{u(alpha)} by $c(\alpha)$. It suffices to verify that $c(\alpha)$ satisfies the optimality conditions \eqref{OC1}, that is, $f - c(\alpha) \in \alpha \partial J(c(\alpha))$. First, note that by item 2 in Remark \ref{rem:subdifequal1} we have $\partial J(c(\alpha)) = \partial J(u_{\beta_1}).$ Next, let $u^*_{\beta_{i}} = f - u_{\beta_i}$, $i=1,2$.

If $\beta_1>0$, we compute
\begin{align*}
	\frac{f-c(\alpha)}{\alpha}
		&= \frac{1}{\alpha} \left[ \frac{\beta_{2}-\alpha}{\beta_{2}-\beta_{1}}u^*_{\beta_{1}}+\frac{\alpha-\beta_{1}}{\beta_{2}-\beta_{1}}u^*_{\beta_{2}} \right] \\
		&= \frac{\beta_1}{\alpha} \frac{\beta_{2}-\alpha}{\beta_{2}-\beta_{1}} \frac{u^*_{\beta_{1}}}{\beta_1} +
		   \frac{\beta_2}{\alpha} \frac{\alpha-\beta_{1}}{\beta_{2}-\beta_{1}} \frac{u^*_{\beta_{2}}}{\beta_2}.
\end{align*}
It is straightforward to check that the last expression is a convex combination of $u^*_{\beta_{1}}/\beta_1$ and $u^*_{\beta_{2}}/\beta_2$. By optimality of $u_{\beta_i}$ and the assumption that $\partial J(u_{\beta_{1}})=\partial J(u_{\beta_{2}})$, both $u^*_{\beta_{i}}/\beta_i$ lie in the same convex set $\partial J(u_{\beta_1}).$ Therefore $(f-c(\alpha))/\alpha$ is in this set, too. We conclude that $c(\alpha)$ must be the ROF minimizer $u_\alpha$.

If $\beta_1=0$, then $u^*_{\beta_1} = 0$ and $(f-c(\alpha))/\alpha = u^*_{\beta_2}/\beta_2 \in \partial J(c(\alpha)).$
\end{proof}
We can now show the following properties of the ROF minimizer.
\begin{proposition} \label{P7}
\leavevmode
\begin{enumerate}
 \item Problem \eqref{ROFG} is mean-preserving, that is
 $$ \sum_{v\in V} u_{\alpha}(v) = \sum_{v\in V} f(v) \quad \text{for all }\alpha\ge 0.$$
 \item The function $\alpha \mapsto	\| u_{\alpha} \|_{2}$ is nonincreasing on $\left[0,\infty\right)$.
\item The solution $u_{\alpha}$ is a continuous piecewise affine function with respect to $\alpha$. Its piecewise constant derivative $d  u_{\alpha}/d \alpha$ exists everywhere except for a finite number of values of $0<\alpha_{1}<...<\alpha_{N}<\infty$. In particular,
 \begin{align}\label{eq:stationaryrof}
  u_{\alpha}(v)=\frac{1}{\left|V\right|}\sum_{w\in V}f(w),\quad \text{for all }\alpha\geq\alpha_{N}\text{ and }v\in V.
 \end{align}
\end{enumerate}
\end{proposition}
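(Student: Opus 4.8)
The plan is to prove the three items of Proposition~\ref{P7} in the order stated, using the dual characterization of $u_\alpha$ from Proposition~\ref{thm:rofconstrained} together with Lemma~\ref{thm:convexcombination} and the finiteness of the collection of subdifferentials from Remark~\ref{rem:subdifequal1}.

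\paragraph{Item 1 (mean preservation).} By the optimality conditions \eqref{OC1} we have $f - u_\alpha = u^*_\alpha \in \alpha\,\partial J(0) = \divergence\mathcal{B}_\alpha$, so it suffices to show that every element of $\divergence\mathcal{B}_1$ has zero sum over $V$. This is immediate from the definition of the divergence: for $H\in\mathbb{R}^E$,
\begin{align*}
\sum_{v\in V}(\divergence H)(v) = \sum_{v\in V}\Bigl(\sum_{w:(w,v)\in E}H((w,v)) - \sum_{w:(v,w)\in E}H((v,w))\Bigr) = 0,
\end{align*}
since each edge contributes its value once with a plus sign (at its head) and once with a minus sign (at its tail). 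Hence $\sum_v u_\alpha(v) = \sum_v f(v)$ for all $\alpha\ge 0$.

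\paragraph{Item 2 (monotonicity of the norm).} Here I would use that $\divergence\mathcal{B}_\alpha = \alpha\,\partial J(0)$ is increasing in $\alpha$ with respect to set inclusion (since $\mathcal{B}_\alpha$ is), so the feasible sets $f - \alpha\,\partial J(0)$ of problem \eqref{206} grow with $\alpha$. Taking the minimal $\ell^2$-norm over a larger set can only decrease the minimum, so $\alpha\mapsto\|u_\alpha\|_2$ is nonincreasing. Alternatively, and perhaps more cleanly, one can invoke the fact that $u^*_\alpha$ is the orthogonal projection of $f$ onto the nested convex sets $\alpha\,\partial J(0)$, and $\|u_\alpha\|_2 = \|f - u^*_\alpha\|_2 = \operatorname{dist}(f, \alpha\,\partial J(0))$, which is nonincreasing in $\alpha$ because the sets are nested.

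\paragraph{Item 3 (piecewise affine structure and stationarity).} This is the substantive part. First I would establish the large-$\alpha$ behaviour: for $\alpha$ large enough, the constant function $\bar f(v) = |V|^{-1}\sum_w f(w)$ is the ROF minimizer. Indeed $\bar f$ is feasible for \eqref{206} precisely when $f - \bar f \in \alpha\,\partial J(0)$, which holds for all $\alpha \ge \alpha_\ast := \|f-\bar f\|_\infty / c$ for a suitable constant (using that $f-\bar f$ has zero mean, hence lies in the subspace spanned by the $\divergence\mathcal{B}_1$, and $\divergence\mathcal{B}_\alpha$ exhausts this subspace as $\alpha\to\infty$); and among zero-sum feasible points $\bar f$ is clearly the $\ell^2$-minimizer over the affine space $\{u : \sum_v u(v) = \sum_v f(v)\}$, which contains the feasible set by Item~1. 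So $u_\alpha = \bar f$ for all $\alpha$ sufficiently large, giving \eqref{eq:stationaryrof}. Next, continuity of $\alpha\mapsto u_\alpha$ follows from stability of the projection $u^*_\alpha = \operatorname{proj}_{\alpha\partial J(0)}(f)$ under the continuous variation of the convex set. For the piecewise-affine claim I would argue as follows: by Remark~\ref{rem:subdifequal1} there are only finitely many distinct sets of the form $\partial J(u)$, hence finitely many distinct values of $\partial J(u_\alpha)$ as $\alpha$ ranges over $[0,\infty)$; I would show that the set of $\alpha$ for which $\partial J(u_\alpha)$ equals a fixed such set is an interval (using continuity of $u_\alpha$ and item~2 of Remark~\ref{rem:subdifequal1} on convex combinations), so $[0,\infty)$ decomposes into finitely many intervals $[\alpha_{k-1},\alpha_k]$ on each of which $\partial J(u_\alpha)$ is constant; on the interior of each such interval Lemma~\ref{thm:convexcombination} shows $u_\alpha$ is affine in $\alpha$. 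The breakpoints $0 < \alpha_1 < \cdots < \alpha_N < \infty$ are the endpoints of these intervals, and past the last one $u_\alpha$ is the constant $\bar f$.

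\paragraph{Main obstacle.} The delicate point is verifying that $\{\alpha : \partial J(u_\alpha) = \Omega_0\}$ is an interval (equivalently, that the ``activity pattern'' of the ROF path changes monotonically in a suitable sense) and that the number of breakpoints is finite rather than merely countable with possible accumulation. The finiteness of $\{\partial J(u) : u\in\mathbb{R}^V\}$ handles the counting once intervals are established, so the real work is the interval/connectedness claim: I expect to combine continuity of $u_\alpha$ with Lemma~\ref{thm:convexcombination}, noting that if $\partial J(u_{\beta_1}) = \partial J(u_{\beta_2})$ for $\beta_1 < \beta_2$ then every $u_\alpha$ with $\beta_1 < \alpha < \beta_2$ is a convex combination of the endpoints and hence, by Remark~\ref{rem:subdifequal1} item~2, shares the same subdifferential — this forces the level sets of $\alpha\mapsto\partial J(u_\alpha)$ to be intervals.
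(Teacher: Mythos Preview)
Your proposal is correct and follows essentially the same route as the paper: divergences have zero sum for Item~1, nested feasible sets $f-\divergence\mathcal{B}_\alpha$ for Item~2, and the combination of continuity, Lemma~\ref{thm:convexcombination}, and Remark~\ref{rem:subdifequal1} for the piecewise affine structure and stationarity in Item~3. The only notable difference is in the continuity argument: the paper derives an explicit estimate $\|u_{\alpha_n}-u_\alpha\|_2^2 \le |\alpha_n-\alpha|\bigl(J(u_\alpha)+J(f)\bigr)$ directly from the optimality conditions and Lemma~\ref{L1}, whereas you appeal to stability of projections onto scaled convex sets; both are valid, though the paper's bound is more self-contained. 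Your treatment of the ``main obstacle''---showing that the level sets $\{\alpha:\partial J(u_\alpha)=\Omega_0\}$ are intervals via Lemma~\ref{thm:convexcombination} and Remark~\ref{rem:subdifequal1}, item~2---is in fact spelled out more carefully than in the paper, which simply asserts the finiteness of changes without isolating the connectedness step.
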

\begin{proof}
\leavevmode
\begin{enumerate}
\item According to Proposition \ref{thm:rofconstrained} we have $u_{\alpha}=f-\divergence H$ for an $H\in\mathbb{R}^E$. Summing this equation over all $v\in V$ and using the fact that $\sum_{v\in V} \divergence H(v)$ vanishes for every $H\in \mathbb{R}^E$ gives $\sum_{v\in V} u_{\alpha}(v) = \sum_{v\in V} f(v)$ for all $\alpha\geq 0$.
\item From the dual formulation of the ROF model, we know that $u_{\alpha}$ is the $\ell^{2}$-minimizer in the set $f-\divergence\mathcal{B}_{\alpha}$. Since $f-\divergence\mathcal{B}_{\beta_{1}}\subset f-\divergence\mathcal{B}_{\beta_{2}}$, $\beta_{1}\leq\beta_{2}$, it then follows that $\alpha\mapsto\| u_{\alpha} \|_{2}$ is nonincreasing.
\item We first prove that the map $\alpha \mapsto u_{\alpha}$ is continuous.
Consider a convergent sequence of regularization parameters $\alpha_n \to \alpha$. According to the optimality condition \eqref{OC1} the corresponding minimizers $u_n \coloneqq u_{\alpha_n}$ and $u \coloneqq u_{\alpha}$ can be expressed as
\begin{align*}
	u_{n}	&=	f - \alpha_n u_n^*, \\
	u	&=	f - \alpha u^*,
\end{align*}
for certain $u^*_n \in \partial J(u_{n})$ and $u^* \in \partial J(u)$. We compute
\begin{align*}
	\| u_{n} - u \|_2^2
		&= \langle u_{n} - u, u_{n} - u \rangle_{\mathbb{R}^V} \\
		&= \langle u_{n} - u, \alpha u^* - \alpha_n u^*_n \rangle_{\mathbb{R}^V} \\
		&= \alpha \langle u_{n}, u^*\rangle_{\mathbb{R}^V} -  \alpha_n \langle u_{n} , u^*_n \rangle_{\mathbb{R}^V} - \alpha \langle u, u^* \rangle_{\mathbb{R}^V} + \alpha_n \langle u, u^*_n \rangle_{\mathbb{R}^V}.
	\intertext{Using the fact that $\langle u,u^* \rangle = J(u)$ and $\langle u_n,u_n^* \rangle = J(u_n)$ while $\langle u_n,u^* \rangle \le J(u_n)$ and $\langle u,u_n^* \rangle \le J(u)$ according to Lemma \ref{L1}, we obtain}
	\| u_{n} - u \|_2^2
		&\le \alpha J(u_{n}) - \alpha_n J(u_{n}) - \alpha J(u) + \alpha_n J(u) \\
		&\le |\alpha_n - \alpha| |J(u) + J(f)|,
\end{align*}
and therefore $u_n \to u$.

The piecewise affine structure of $u_{\alpha}$ has been shown in \cite[Thm.~4.6]{Burger1}. However, since our proof relies on different arguments, we choose to include it.

From Lemma \ref{thm:convexcombination} as well as Remark \ref{rem:subdifequal1}, items 2 and 3, we can derive two important facts. These two facts, combined with continuity of the map $\alpha \mapsto u_\alpha$, show that it must be piecewise affine on $[0,\infty)$. First, the subdifferential $\partial J(u_\alpha)$ can only change a finite number of times. Second, in intervals where it does not change, the minimizer $u_\alpha$ is an affine function of $\alpha$.
\par Finally, consider $u_{\alpha}$ for $\alpha\geq\alpha_{N}$, where $\alpha_{N}$ is the last time $\partial J (u_\alpha)$ changes. Let $\bar{f}$ denote the averaged initial image $f$, i.e.
 \begin{align}\label{eq:faverage}
  \bar{f}(v)=\frac{1}{\left|V\right|}\sum_{w\in V}f(w), \quad \text{for all }v\in V.
 \end{align}
For $\alpha\geq C$, where $C>0$ is chosen large enough, it follows that $\bar{f}\in f-\divergence\mathcal{B}_{\alpha}$. Clearly, $\bar{f}$ is the $\ell^{2}$-minimizer in $f-\divergence\mathcal{B}_{\alpha}$. Combined with the piecewise affine structure of $u_{\alpha}$, we conclude that $u_{\alpha}=\bar{f}$ for $\alpha\geq\alpha_{N}$.
\end{enumerate}
\end{proof}
\begin{remark}
Recall that in Section \ref{S1} we have assumed the graph to be connected. If this assumption is dropped, then \eqref{eq:stationaryrof} does not hold in general, since $\bar f$ might not be a minimizer for any $\alpha.$ If the graph is disconnected, however, the ROF problem decouples into mutually independent subproblems, one for each connected component of the graph. Statement \eqref{eq:stationaryrof} then applies to each subproblem. An analogous remark can be made about property \eqref{eq:stationaryflow} of the TV flow.
\end{remark}

\section{The TV flow on the graph}\label{sec:tvflow}
In this section we consider the gradient flow associated to $J$. That is, given an initial datum $f:V\to \mathbb{R}$ we want to find a function $u:[0,\infty) \to \mathbb{R}^V$ that solves the Cauchy problem
\begin{equation} \label{TVFG}
	\begin{aligned}
		u'(t)	&\in 	-\partial J(u(t)) \quad \text{for a.e. } t > 0,\\
		u(0)	&=		f.
	\end{aligned}
\end{equation}
The statements in the next theorem follow from general results on nonlinear evolution equations and semigroup theory. See \cite[Chap.\ 4]{Barbu1} for a detailed treatment and \cite[Sec.\ 2.1]{Santambrogio1} for a brief introduction to the finite-dimensional setting.
\begin{theorem}\label{thm:flowwellposed}
Solutions to problem \eqref{TVFG} have the following properties.
\begin{enumerate}
	\item For every $f\in \mathbb{R}^V$ there is a unique solution and this solution depends continuously on $f$. In particular, if $u_1$ and $u_2$ are two solutions corresponding to initial conditions $f_1$ and $f_2$, respectively, then
		$$ \| u_1(t) - u_2(t)\|_{2} \le \| u_1(s) - u_2(s) \|_{2} \quad \text{for all } t \ge s \ge 0.$$
	\item The solution $u$ lies in $C([0,\infty),\mathbb{R}^V) \cap W^{1,\infty}([0,\infty),\mathbb{R}^V)$ and satisfies
		$$ \|u'(t)\|_{2} \le \| \partial^\circ J(f) \|_{2} \quad \text{for a.e. } t \ge 0.$$
	\item The solution is right differentiable everywhere. Its right derivative is right continuous, it satisfies
		\begin{equation}\label{eq:rightderivative}
			\frac{d^+}{dt} u(t)	= -\partial^\circ J(u(t)), \quad \text{for all } t \ge 0,
		\end{equation}
		and the map
		$$ t \mapsto \Big\| \frac{d^+}{dt} u(t) \Big\|_{2} $$
		is nonincreasing.
	\item Define $S_t(f) = u(t)$. Then, for every $f\in \mathbb{R}^V$, we have
		$$ S_t (S_s (f)) = S_{t+s}(f) \quad \text{for all } t,s \ge 0.$$
	\item The function $u(t) \in  \mathbb{R}^V$ converges to a minimizer of $J$ as $t\to \infty$. 
\end{enumerate}
\end{theorem}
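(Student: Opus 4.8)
The plan is to recognize \eqref{TVFG} as the gradient flow of a proper, convex, lower semicontinuous functional on the finite-dimensional Hilbert space $(\mathbb{R}^V,\langle\cdot,\cdot\rangle_{\mathbb{R}^V})$ and then read off all five assertions from the classical theory of such flows, for which \cite{Barbu1} is the standard reference and \cite[Sec.~2.1]{Santambrogio1} a convenient finite-dimensional summary. Since $J$ is convex and finite on all of $\mathbb{R}^V$ it is continuous, its subdifferential $\partial J$ is maximal monotone with full domain $D(\partial J)=\mathbb{R}^V$, and $\partial J(u)$ is non-empty, closed and convex for every $u$, as already noted in Section~\ref{S1}. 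The existence and uniqueness of a strong, Lipschitz solution for every $f\in\mathbb{R}^V$ is then the basic generation theorem, which gives the first sentence of the first item. The contraction estimate is the standard comparison between two trajectories: for a.e.\ $t$ we have $\frac{d}{dt}\tfrac12\|u_1(t)-u_2(t)\|_2^2=\langle u_1'(t)-u_2'(t),\,u_1(t)-u_2(t)\rangle_{\mathbb{R}^V}\le 0$ by monotonicity of $\partial J$, so $t\mapsto\|u_1(t)-u_2(t)\|_2$ is nonincreasing.

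The second and third items would be quoted from the sharper regularity part of the theory, which applies precisely because $D(\partial J)=\mathbb{R}^V$: the solution lies in $C([0,\infty),\mathbb{R}^V)\cap W^{1,\infty}([0,\infty),\mathbb{R}^V)$, it is right differentiable at every $t\ge 0$ with $\frac{d^+}{dt}u(t)=-\partial^\circ J(u(t))$, this right derivative is right continuous, and $t\mapsto\|\frac{d^+}{dt}u(t)\|_2$ is nonincreasing; this is verbatim the third item together with \eqref{eq:rightderivative}. The second item then follows because $u'(t)=\frac{d^+}{dt}u(t)$ for a.e.\ $t$ and, by the monotonicity just mentioned together with $u(0)=f$, $\|\frac{d^+}{dt}u(t)\|_2\le\|\frac{d^+}{dt}u(0)\|_2=\|\partial^\circ J(f)\|_2$. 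The fourth item is immediate from uniqueness: both $t\mapsto S_{t+s}(f)$ and $t\mapsto S_t(S_s(f))$ solve \eqref{TVFG} with the same initial datum $S_s(f)=u(s)$, hence coincide; autonomy of the equation is what makes the time shift of a solution again a solution.

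The only assertion needing a genuine (though routine) argument is the last one. Note first that $J$ attains its minimum value $0$, since every constant function $w$ satisfies $J(w)=0$; hence $\min J=0$, $0\in\partial J(w)$, and constants are stationary solutions of \eqref{TVFG}. I would then argue in four steps. First, boundedness: applying the contraction estimate with the stationary solution $\bar f$ from \eqref{eq:faverage} gives $\|u(t)-\bar f\|_2\le\|f-\bar f\|_2$ for all $t$, so the orbit stays in a compact set. Second, energy dissipation: since $u'(t)=-\partial^\circ J(u(t))\in-\partial J(u(t))$ for a.e.\ $t$, the chain rule for convex functions along absolutely continuous curves gives $\frac{d}{dt}J(u(t))=-\|u'(t)\|_2^2$ a.e., whence $\int_0^\infty\|u'(t)\|_2^2\,dt\le J(f)-\min J<\infty$; as $t\mapsto\|\frac{d^+}{dt}u(t)\|_2$ is nonincreasing and equals $\|u'(t)\|_2$ for a.e.\ $t$, a strictly positive limit would force this integral to diverge, so $\|\partial^\circ J(u(t))\|_2=\|\frac{d^+}{dt}u(t)\|_2\to 0$ as $t\to\infty$. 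Third, extraction of a limit point: choose $t_k\to\infty$ with $u(t_k)\to u_\infty$; since $\partial^\circ J(u(t_k))\in\partial J(u(t_k))$ tends to $0$ and $\partial J$ has closed graph, $0\in\partial J(u_\infty)$, i.e.\ $u_\infty$ minimizes $J$. Fourth, upgrading to convergence of the whole trajectory: $u_\infty$ is itself a stationary solution, so the contraction estimate makes $t\mapsto\|u(t)-u_\infty\|_2$ nonincreasing; it has $0$ as a subsequential limit, hence converges to $0$, i.e.\ $u(t)\to u_\infty$.

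I would expect the last two steps above — identifying a subsequential limit that is a minimizer and then promoting this to convergence of the full flow via contractivity toward stationary solutions — to be the most delicate part, although it is entirely standard LaSalle-type reasoning; the first four items are pure citations to \cite{Barbu1,Santambrogio1}.
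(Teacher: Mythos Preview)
Your proposal is correct and matches the paper's approach: the paper does not give a self-contained proof but simply cites the general theory of nonlinear evolution equations (\cite[Chap.~4]{Barbu1} and \cite[Sec.~2.1]{Santambrogio1}), which is precisely what you do. Your write-up in fact supplies more detail than the paper, particularly the LaSalle-type argument for item~5; this is fine and entirely in the spirit of the cited references.
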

Equation \eqref{eq:rightderivative} is a strengthening of the inclusion in \eqref{TVFG}. It implies, for instance, that whenever $u'$ exists, it equals $-\partial^\circ J (u)$. Note that Theorem \ref{thm:flowwellposed} actually holds true for any convex real-valued functional, which admits a minimizer on $\mathbb{R}^V$, in place of $J$. For $J$ being the total variation, however, we have in addition the following analogue of Proposition \ref{P7}.
\begin{proposition}\label{thm:flowproperties}
\leavevmode
\begin{enumerate}
	\item Problem \eqref{TVFG} is mean-preserving, that is,
		$$ \sum_{v\in V} u(t)(v) = \sum_{v\in V} f(v) \quad \text{for all } t \ge 0.$$
	\item The function $t \mapsto	\| u(t) \|_{2}$	is nonincreasing on $[0,\infty)$.
	\item The solution $u$ is piecewise affine with respect to $t$. More specifically, the derivative $u'(t)$ does not exist for only a finite number of times $0<t_1 < \cdots < t_M$ and it is constant in between. It follows that a stationary solution is reached in finite time:
	\begin{equation}\label{eq:stationaryflow}
		u(t)(v) = \frac{1}{|V|}\sum_{w\in V} f(w)  \quad \text{for all } t \ge t_M \text{ and } v\in V.	
	\end{equation}
\end{enumerate}
\end{proposition}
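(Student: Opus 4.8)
The plan is to treat the three assertions of the proposition separately, combining Theorem~\ref{thm:flowwellposed} with the structural facts about $\partial J$ recorded in Lemma~\ref{L1} and Remark~\ref{rem:subdifequal1}. For the mean‑preserving property, I would start from Lemma~\ref{L1}, item~3: for a.e.\ $t$ one has $u'(t)\in-\partial J(u(t))=-\divergence\mathcal{B}_{1,u(t)}$, so $u'(t)=-\divergence H_t$ for some $H_t\in\mathbb{R}^{E}$; since $\sum_{v\in V}(\divergence H)(v)=0$ for every $H\in\mathbb{R}^{E}$ (each edge is counted once with each sign), this gives $\tfrac{d}{dt}\sum_{v\in V}u(t)(v)=0$ a.e., and because $u\in W^{1,\infty}$ the function $t\mapsto\sum_{v\in V}u(t)(v)$ is absolutely continuous, hence constant and equal to $\sum_{v\in V}f(v)$. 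For monotonicity of $t\mapsto\|u(t)\|_{2}$, observe that $0\in\partial J(0)=\divergence\mathcal{B}_{1}$, so $t\mapsto 0$ is (by uniqueness) the solution of \eqref{TVFG} with datum $0$; applying the contraction estimate of Theorem~\ref{thm:flowwellposed}, item~1, to the pair of data $f$ and $0$ yields $\|u(t)\|_{2}=\|u(t)-0\|_{2}\le\|u(s)-0\|_{2}=\|u(s)\|_{2}$ for all $t\ge s\ge 0$.

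For the third assertion, two observations set the stage. First, on any interval on which $\partial J(u(\cdot))$ is constant the solution $u$ is affine: by \eqref{eq:rightderivative} its right derivative equals $-\partial^{\circ}J(u(t))$, the minimal‑norm element of a fixed polytope and hence a fixed vector, while $u$ is continuous. Second, the speed $v(t):=\|\partial^{\circ}J(u(t))\|_{2}$ is nonincreasing (Theorem~\ref{thm:flowwellposed}, item~3) and, since $\partial^{\circ}J(u(t))$ is determined by the polytope $\partial J(u(t))$, it takes at most $3^{|E|}$ values (Remark~\ref{rem:subdifequal1}, item~3); a nonincreasing function taking finitely many values is constant on each of finitely many intervals $I_{1},\dots,I_{m}$ that partition $[0,\infty)$. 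It therefore suffices to show that $u$ is affine on each $I_{j}$ and that $\partial J(u(\cdot))$ changes only finitely often there; this is the step I expect to be the main obstacle, since (as the later non‑equivalence example shows) the sign configuration need not evolve monotonically, so the simple ``level sets are intervals'' argument used for the ROF minimizer is unavailable.

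The key lemma would run as follows. Fix $t_{0}\in I_{j}$, write $c_{j}=v(t_{0})$, and let $t^{*}$ be the first time after $t_{0}$ at which $\partial J(u(\cdot))$ differs from $P_{1}:=\partial J(u(t_{0}))$; if there is none, $u$ is already affine on $[t_{0},\sup I_{j})$. On $[t_{0},t^{*})$ the sign configuration $(\sgn(u(t)(v_{i})-u(t)(v_{j})))_{(v_{i},v_{j})\in E}$ is constant (Remark~\ref{rem:subdifequal1}, item~1), so there $u$ is affine with velocity $-p^{\circ}$, where $p^{\circ}:=\partial^{\circ}J(u(t_{0}))\in P_{1}$ and $\|p^{\circ}\|_{2}=c_{j}$. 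Passing to the limit $t\uparrow t^{*}$, strict sign relations can only turn into equalities, hence $\mathcal{B}_{1,u(t)}\subseteq\mathcal{B}_{1,u(t^{*})}$ and so $P_{1}\subseteq Q:=\partial J(u(t^{*}))$. Since $p^{\circ}\in P_{1}\subseteq Q$ and $\|p^{\circ}\|_{2}=c_{j}=v(t^{*})=\|\partial^{\circ}J(u(t^{*}))\|_{2}=\operatorname{dist}(0,Q)$, uniqueness of the metric projection onto the convex set $Q$ forces $\partial^{\circ}J(u(t^{*}))=p^{\circ}$, and by right continuity of the right derivative (Theorem~\ref{thm:flowwellposed}, item~3) the velocity just after $t^{*}$ is again $-p^{\circ}$. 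Iterating and using continuity of $u$, the solution coincides on $I_{j}$ with the single affine map $t\mapsto u(t_{0})-(t-t_{0})p^{\circ}$; along it every difference $u(t)(v_{i})-u(t)(v_{j})$ is affine in $t$ and changes sign at most once, so $\partial J(u(\cdot))$ changes at most $|E|$ times on $I_{j}$. Summing over the finitely many $I_{j}$ shows that $u$ is continuous and piecewise affine with finitely many breakpoints $0<t_{1}<\dots<t_{M}<\infty$ and piecewise‑constant derivative.

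Finally, \eqref{eq:stationaryflow} follows quickly: the last interval $I_{m}$ is unbounded, on it $u$ is affine, and by the second assertion $u$ is bounded, hence $u$ is constant on $I_{m}$ with value $u_{\infty}$; then $0\in\partial J(u_{\infty})$, i.e.\ $u_{\infty}$ minimizes $J$, and on a connected graph the minimizers of $J$ are exactly the constant functions, so the first assertion pins the constant down as $\tfrac{1}{|V|}\sum_{w\in V}f(w)$. The only genuinely delicate point in this program is the projection/right‑continuity argument of the third paragraph, which is what replaces Lemma~\ref{thm:convexcombination} from the ROF analysis and forces the flow trajectory to stay straight on each constant‑speed interval.
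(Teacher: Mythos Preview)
Parts 1 and 2 are fine. Your proof of item~1 coincides with the paper's; for item~2 you invoke the contraction estimate against the zero trajectory, which is a valid and arguably slicker alternative to the paper's direct computation $\tfrac12\tfrac{d}{dt}\|u(t)\|_2^2=\langle u(t),u'(t)\rangle_{\mathbb{R}^V}=-J(u(t))$. For \eqref{eq:stationaryflow} you argue via boundedness of an affine map on an unbounded interval rather than via the convergence statement in Theorem~\ref{thm:flowwellposed}, item~5; both are correct.

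The gap is in item~3, in your definition of $t^{*}$ as ``the first time after $t_{0}$ at which $\partial J(u(\cdot))$ differs from $P_{1}:=\partial J(u(t_{0}))$.'' Such a first time need not exist, and the infimum of that set can be $t_{0}$ itself: if some edge $(v_i,v_j)$ has $u(t_{0})(v_{i})=u(t_{0})(v_{j})$ but $p^{\circ}(v_{i})\neq p^{\circ}(v_{j})$, then along the affine segment you do have on a right neighbourhood of $t_{0}$ the sign on that edge becomes strict for every $t>t_{0}$, so $\partial J(u(t))\subsetneq P_{1}$ for all $t\in(t_{0},t_{0}+\epsilon)$. In that scenario your inclusion ``$P_{1}\subseteq Q$'' points the wrong way and the argument never gets off the ground. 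The remedy is to track the \emph{velocity} rather than the subdifferential: set $t^{*}=\sup\{t\in I_{j}: d^{+}u/dt\equiv -p^{\circ}\text{ on }[t_{0},t]\}$, which is $>t_{0}$ by right continuity plus finite range. On $[t_{0},t^{*})$ one has $p^{\circ}=\partial^{\circ}J(u(t))\in\partial J(u(t))$, hence $\langle u(t),p^{\circ}\rangle_{\mathbb{R}^V}=J(u(t))$ by Lemma~\ref{L1}; letting $t\uparrow t^{*}$ and using continuity of $u$ and of $J$ yields $p^{\circ}\in\partial J(u(t^{*}))$ directly, with no sign-monotonicity argument needed, after which your projection step goes through verbatim. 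This is precisely the paper's argument, which phrases it contrapositively---whenever the right derivative changes, its norm \emph{strictly} decreases---so once the glitch is repaired your approach and the paper's are essentially the same.
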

\begin{proof}
\leavevmode
\begin{enumerate}
	\item Since the subdifferential of $J$ consists entirely of divergences of edge functions, for a.e.\ $t\ge 0$ there is an $H(t) \in \mathbb{R}^E$ such that
		\begin{equation*}
			u'(t) = -\divergence H(t).
		\end{equation*}
		Summing this equation over all $v\in V$ and using the fact that $\sum_{v\in V} \divergence H(v)$ vanishes for every $H\in \mathbb{R}^E$ gives
		$$ \frac{d}{dt} \sum_{v\in V} u(t)(v) = 0 \quad \text{for a.e. } t\ge 0.$$
		Since $u\in W^{1,\infty}([0,\infty),\mathbb{R}^V)$, the assertion follows.		
	\item From $-u'(t) \in \partial J(u(t))$ and the characterization of the subdifferential in Lemma \ref{L1}, it follows that
		$\langle u(t), -u'(t) \rangle_{\mathbb{R}^V} = J(u(t)).$ Therefore
		$$-J(u(t)) = \langle u(t), u'(t) \rangle_{\mathbb{R}^V} = \frac{1}{2} \frac{d}{dt} \|u(t)\|^2_{2}$$
		for a.e.~$t>0$, which shows that $t \mapsto \| u(t) \|_{2}$ is nonincreasing.	
	\item As for the ROF minimizer the piecewise affine behaviour has been shown in \cite[Thm.~4.6]{Burger1}. Our proof uses different arguments. According to item 3 in Remark \ref{rem:subdifequal1} the number of different values the right derivative of $u$ can take is finite. Since $d^+u/dt$ is also right continuous, there must be an $\epsilon>0$ for every $t_0 \ge 0$ such that
		$$ \frac{d^+}{dt} u(t) = -\partial^\circ J(u(t_0)) \quad \text{for all } t \in [t_0,t_0+\epsilon)$$
with $d^+u/dt = u'$ on $(t_0,t_0+\epsilon)$. This proves that $t \mapsto u(t)$ is piecewise affine on $[0,\infty).$

That $d^+u/dt$ only changes a finite number of times follows from the fact that, if it changes, then its norm becomes strictly smaller. To see this let $\hat t>0$ and assume that $d^+u(t)/dt \equiv c$ is constant on $(\hat t -\epsilon,\hat t)$ for some $\epsilon>0$ and that $d^+u(\hat t)/dt \neq c$. We now have
\begin{align*}
	J(u(\hat t)) =	\lim_{t\to \hat{t}^-} J(u(t)) =	\lim_{t\to \hat{t}^-} \langle u(t), -c \rangle  = \langle u(\hat t), -c \rangle,
\end{align*}
and therefore $-c \in \partial J(u(\hat t))$. However, since $-c = \partial^\circ J(u(t))$ for $t\in (\hat t - \epsilon, \hat{t})$ and the minimal section is the unique element of minimal norm in the subdifferential, we must have $\|d^+u(t)/dt\|_2 > \|d^+u(\hat t)/dt\|_2$. This combined with the fact that $d^+u/dt$ can take only a finite number of values, implies that it can change only a finite number of times.

Thus $t\mapsto u(t)$ is a continuous piecewise affine function with a finite number of slope changes. Since, by item 5 in Theorem \ref{thm:flowwellposed}, $u(t)$ is convergent, it must reach its limit in finite time. Due to mean preservation, this limit has to be the averaged initial datum.
\end{enumerate}
\end{proof}

\section{Comparison of TV regularization and TV flow} \label{sec:comparison}
In this section we first provide and analyze various conditions for the equivalence of TV regularization and TV flow on graphs. We then show that they are non-equivalent methods by constructing a counterexample.

\subsection{Conditions for equivalence of TV regularization and TV flow}
Proposition \ref{thm:norm} below relates the norms of the solutions of the TV regularization and the TV flow to each other. Recall that $\bar{f}$ denotes the averaged datum $f$, see \eqref{eq:faverage}.
\begin{proposition} \label{thm:norm}
For every $\alpha>0$ let $u_\alpha$ and $u(\alpha)$ be the ROF and TV flow solutions, respectively, both corresponding to the same datum $f\in \mathbb{R}^V$. They satisfy
$$ \|\bar f \|_{2} \le \|u_\alpha\|_{2} \le \| u(\alpha) \|_{2} \le \| f \|_{2}, \quad \text{for all } \alpha>0.$$
It follows that in general $u_\alpha$ reaches $\bar f$ before $u(t)$, that is, $\alpha_N \le t_M$, see Propositions \ref{P7} and \ref{thm:flowproperties}.
\end{proposition}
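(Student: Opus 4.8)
The plan is to establish the chain of inequalities from the outside in, exploiting the two facts that (i) the ROF minimizer $u_\alpha$ is the $\ell^2$-minimizer in the set $f - \alpha\partial J(0) = f - \divergence\mathcal{B}_\alpha$ (Proposition \ref{thm:rofconstrained}), and (ii) the TV flow decreases the $\ell^2$-norm monotonically (Proposition \ref{thm:flowproperties}, item 2), with $u(0) = f$. The innermost and outermost bounds are the easy ones. The inequality $\|\bar f\|_2 \le \|u_\alpha\|_2$ holds because $u_\alpha$ and $\bar f$ have the same mean (Proposition \ref{P7}, item 1, and mean preservation of the flow), and among all vectors with a prescribed coordinate sum the constant vector $\bar f$ has minimal $\ell^2$-norm (Cauchy--Schwarz). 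The inequality $\|u(\alpha)\|_2 \le \|f\|_2$ is immediate from item 2 of Proposition \ref{thm:flowproperties} together with $u(0) = f$.

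The crux is the middle inequality $\|u_\alpha\|_2 \le \|u(\alpha)\|_2$. Here I would use the characterization of $u_\alpha$ as the minimal-norm element of $f - \divergence\mathcal{B}_\alpha$ and show that $u(\alpha)$ also lies in this set; the inequality then follows by minimality. To see that $u(\alpha) \in f - \divergence\mathcal{B}_\alpha$, write the flow using the integral of its right derivative: by Theorem \ref{thm:flowwellposed}, item 3, $\frac{d^+}{dt}u(t) = -\partial^\circ J(u(t))$, and since $\partial^\circ J(u(t)) \in \partial J(u(t)) \subset \partial J(0) = \divergence\mathcal{B}_1$, there is $H(t) \in \mathcal{B}_1$ with $u'(t) = -\divergence H(t)$ for a.e. $t$. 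Then
\begin{align*}
	u(\alpha) = f - \int_0^\alpha \divergence H(t)\, dt = f - \divergence\!\left( \int_0^\alpha H(t)\, dt \right),
\end{align*}
using linearity of $\divergence$. The edge function $\int_0^\alpha H(t)\,dt$ has $\ell^\infty$-norm at most $\int_0^\alpha \|H(t)\|_\infty\, dt \le \alpha$, so it lies in $\mathcal{B}_\alpha$, whence $u(\alpha) \in f - \divergence\mathcal{B}_\alpha$ as claimed. I expect the main obstacle to be the measure-theoretic bookkeeping: justifying that $t \mapsto H(t)$ can be chosen measurable and (Bochner-)integrable on $[0,\alpha]$, and that $\divergence$ commutes with the integral. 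This is harmless because $u \in W^{1,\infty}([0,\infty),\mathbb{R}^V)$ (Theorem \ref{thm:flowwellposed}, item 2), so $u'$ is an essentially bounded measurable $\mathbb{R}^V$-valued function and $u(\alpha) - f = \int_0^\alpha u'(t)\,dt$; one then only needs that $u'(t) \in \divergence\mathcal{B}_1$ for a.e. $t$, which is exactly \eqref{eq:rightderivative}, and that $\divergence\mathcal{B}_1$ is a bounded convex set, so its preimage argument or a direct componentwise estimate gives $u(\alpha) - f \in \divergence\mathcal{B}_\alpha$ by convexity of $\mathcal{B}_\alpha$ and the fact that $\frac{1}{\alpha}\int_0^\alpha u'(t)\,dt$ is an average of elements of $\divergence\mathcal{B}_1$.

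Finally, the consequence $\alpha_N \le t_M$ follows by combining the norm inequality with the structural results of Propositions \ref{P7} and \ref{thm:flowproperties}: for $\alpha \ge \alpha_N$ we have $u_\alpha = \bar f$, so $\|u(\alpha)\|_2 \ge \|u_\alpha\|_2 = \|\bar f\|_2$; but once the flow reaches $\bar f$ at time $t_M$ it stays there, and for $t < t_M$ the piecewise affine, norm-nonincreasing flow with finitely many slope changes has not yet reached the strictly smaller value $\|\bar f\|_2$ (it reaches $\bar f$ exactly at $t_M$). Hence the flow cannot have reached $\bar f$ strictly before the smallest $\alpha$ at which ROF does, i.e. $\alpha_N \le t_M$.
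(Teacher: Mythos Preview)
Your proof is correct and follows essentially the same route as the paper: the outer bounds come from mean preservation and monotonicity of the $\ell^2$-norm, while the middle inequality is obtained by showing $u(\alpha)\in f-\divergence\mathcal{B}_\alpha$ and invoking that $u_\alpha$ is the minimal-norm element of this set. You supply more detail than the paper does---in particular the justification that $\tfrac{1}{\alpha}\int_0^\alpha u'(t)\,dt\in\divergence\mathcal{B}_1$ and the argument for $\alpha_N\le t_M$---but the underlying ideas are the same.
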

\begin{proof}
Both $\|u_\alpha\|_{2}$ and $\|u(\alpha)\|_{2}$ are nonincreasing functions of $\alpha$, recall property 2 in Propositions \ref{P7} and \ref{thm:flowproperties}, and therefore bounded from above by $\| f \|_{2}$. On the other hand, due to mean preservation, recall property 1 in Propositions \ref{P7} and \ref{thm:flowproperties}, they are bounded from below by $\|\bar f \|_{2}$. It remains to show that $\|u_\alpha\|_{2} \le \| u(\alpha) \|_{2}$. To see this, observe that both $u_\alpha$ and $u(\alpha)$ lie in $f-\divergence\mathcal{B}_{\alpha}$ with $u_\alpha$ being the element of minimal norm in this set according to \eqref{206}.
\end{proof}
The next proposition collects several conditions for equality of ROF and TV flow solutions. The second condition is an adaptation of \cite[Thm.\ 10]{Lasica1} to the graph setting.
\begin{proposition}\label{thm:eqcond}
Let $u_\alpha$ and $u(t)$ be the ROF and TV flow solutions for a given common datum $f\in \mathbb{R}^V$.
\begin{enumerate}
	\item Let $\alpha> 0.$ We have $u_\alpha = u(\alpha)$ if and only if
		 \begin{align} \label{eq:necsuffcond}
 			 -\frac{1}{\alpha}\int^{\alpha}_{0}u^{\prime}(t)dt\in\partial J(u(\alpha)).
 		\end{align}
 	\item Let $\alpha> 0.$ If
 	\begin{align}\label{eq:suffcond}
		-\langle u'(t), u(\alpha) \rangle_{\mathbb{R}^V} = J(u(\alpha)) \quad \text{for a.e. } t \in (0,\alpha)
	\end{align}
		then $u(\alpha) = u_{\alpha}$. Moreover, condition \eqref{eq:suffcond} is always satisfied for $\alpha=t_1$, where $t_1$ is the first time $u'(t)$ does not exist.
	\item Define $T_\alpha(f) = u_\alpha$. We have
		$$u_\alpha = u(\alpha) \quad \text{for all} \quad \alpha \ge 0$$
		if and only if
 	\begin{align} \label{eq:semigrouprof}
  		T_{t}(T_{s}(f))=T_{t+s}(f) \quad \text{for all} \quad  t,s\geq 0.
 	\end{align}
\end{enumerate}
\end{proposition}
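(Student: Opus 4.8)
The plan is to prove the three conditions in sequence, with conditions 1 and 3 being essentially characterizations via the available structure (optimality conditions and semigroup properties), and condition 2 a sufficient condition derived from condition 1.

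\textbf{Condition 1.} Recall from the optimality conditions \eqref{OC1} that $u_\alpha = u(\alpha)$ holds precisely when $u(\alpha)$ is the ROF minimizer, equivalently $f - u(\alpha) \in \alpha \partial J(u(\alpha))$, i.e.\ $(f - u(\alpha))/\alpha \in \partial J(u(\alpha))$. On the other hand, since $u$ solves the flow \eqref{TVFG} with $u(0) = f$, we have $u(\alpha) = f - \int_0^\alpha (-u'(t))\,dt$ interpreted as $u(\alpha) - f = \int_0^\alpha u'(t)\,dt$, so that $(f - u(\alpha))/\alpha = -\tfrac1\alpha \int_0^\alpha u'(t)\,dt$. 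First I would note that both ROF minimizer and flow solution preserve the mean (Propositions \ref{P7} and \ref{thm:flowproperties}) so the question of equality is well-posed, then substitute this identity into the optimality condition above. The equivalence $u_\alpha = u(\alpha) \iff -\tfrac1\alpha \int_0^\alpha u'(t)\,dt \in \partial J(u(\alpha))$ is then immediate from the uniqueness of the ROF minimizer. This is the easy part.

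\textbf{Condition 2.} Here I would show \eqref{eq:suffcond} implies \eqref{eq:necsuffcond}. By item 2 of Lemma \ref{L1}, $\partial J(u(\alpha)) = \{u^* \in \partial J(0) : \langle u(\alpha), u^*\rangle_{\mathbb{R}^V} = J(u(\alpha))\}$. Set $u^* = -\tfrac1\alpha\int_0^\alpha u'(t)\,dt$. Its membership in $\partial J(0) = \divergence\mathcal{B}_1$ follows since $\partial J(0)$ is convex and, for a.e.\ $t$, $-u'(t) = \partial^\circ J(u(t)) \in \partial J(u(t)) \subset \partial J(0)$, so the average lies in $\partial J(0)$. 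For the inner-product condition, integrate \eqref{eq:suffcond}: $\langle u(\alpha), u^*\rangle = -\tfrac1\alpha\int_0^\alpha \langle u'(t), u(\alpha)\rangle\,dt = \tfrac1\alpha\int_0^\alpha J(u(\alpha))\,dt = J(u(\alpha))$. Hence $u^* \in \partial J(u(\alpha))$ and condition 1 applies. For the last claim — that \eqref{eq:suffcond} holds at $\alpha = t_1$ — I would use that on $(0, t_1)$ the derivative $u'$ exists and is constant (Proposition \ref{thm:flowproperties}, item 3), say $u'(t) \equiv -c$ with $c = \partial^\circ J(f) = \partial^\circ J(u(t))$ for $t \in [0, t_1)$; then $\langle u(t), -u'(t)\rangle_{\mathbb{R}^V} = \langle u(t), c\rangle = J(u(t))$ for $t \in (0, t_1)$, and by continuity of $u$ and of $J$ this extends to $\langle u(t_1), c\rangle = J(u(t_1))$, which is exactly \eqref{eq:suffcond} at $\alpha = t_1$ (with $u(\alpha) = u(t_1)$).

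\textbf{Condition 3.} This is a formal consequence of the semigroup structure. The forward direction: if $u_\alpha = u(\alpha)$ for all $\alpha$, then $T_\alpha = S_\alpha$, and $S_t(S_s(f)) = S_{t+s}(f)$ by item 4 of Theorem \ref{thm:flowwellposed}, giving \eqref{eq:semigrouprof}. The reverse direction is the one requiring an argument: assuming \eqref{eq:semigrouprof}, I would show $T_\alpha(f) = S_\alpha(f)$ for all $\alpha$ and all $f$. The natural approach is to differentiate the semigroup relation: fix $f$ and $s$, and consider the right derivative at $t = 0$ of $\alpha \mapsto T_\alpha(T_s(f))$; since $T$ is piecewise affine in its parameter (Proposition \ref{P7}, item 3), this right derivative exists. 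On one hand it equals $\tfrac{d^+}{d\alpha}\big|_{\alpha = s} T_\alpha(f)$ by the semigroup identity. I would then argue that for the ROF family the right derivative of $\alpha \mapsto T_\alpha(f)$ at $\alpha = 0$ equals $-\partial^\circ J(f)$: from Proposition \ref{thm:rofconstrained}, $u_\alpha$ is the $\ell^2$-projection of $0$ onto $f - \alpha\partial J(0)$, equivalently $f - u_\alpha = $ projection of $f$ onto $\alpha\partial J(0) = \divergence\mathcal{B}_\alpha$; as $\alpha \to 0^+$ this projection behaves to first order like $\alpha$ times the minimal-norm element of $\partial J(f)$ (using that $0 \in \partial J(0)$ and a scaling/variational argument on the projection), i.e.\ $(f - u_\alpha)/\alpha \to \partial^\circ J(f)$, so $\tfrac{d^+}{d\alpha}\big|_{\alpha=0^+} u_\alpha = -\partial^\circ J(f)$. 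Combining, the semigroup relation forces $\tfrac{d^+}{d\alpha}\big|_{\alpha=s} T_\alpha(f) = -\partial^\circ J(T_s(f))$ for all $s$, and since $T_0(f) = f$, the curve $s \mapsto T_s(f)$ is the unique solution of \eqref{TVFG} by item 3 of Theorem \ref{thm:flowwellposed} (together with uniqueness), hence equals $S_s(f)$.

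\textbf{Main obstacle.} I expect the hard part to be the reverse direction of condition 3 — specifically, establishing rigorously that the right derivative of the ROF path at $\alpha = 0$ is $-\partial^\circ J(f)$ and that the semigroup identity propagates this to all times to pin down the full trajectory. The piecewise-affine structure of $u_\alpha$ makes the right derivative well-defined, but identifying it with the minimal section requires care: one must control the first-order behavior of the projection onto the shrinking polytope $\divergence\mathcal{B}_\alpha$, presumably by exploiting that these sets are nested dilates of the fixed invariant $\varphi$-minimal polytope $\partial J(0)$ and invoking the characterization of $\partial J(f)$ as a face of $\partial J(0)$ from Theorem \ref{thm:subdifphimin}. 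Conditions 1 and 2 are routine given the optimality conditions and Lemma \ref{L1}.
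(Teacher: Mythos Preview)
Your proofs of items 1 and 2, and of the forward direction of item 3, are correct and essentially match the paper.

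For the reverse direction of item 3 you take a genuinely different and unnecessarily laborious route. The paper's argument is a simple bootstrap using item 2, which you have already proved: item 2 shows \emph{unconditionally} (for any datum) that $T_\alpha(g) = S_\alpha(g)$ for all $\alpha \in [0,t_1(g)]$, where $t_1(g)$ is the first breakpoint of the flow started at $g$. Assuming the ROF semigroup identity \eqref{eq:semigrouprof}, one then writes, for $\alpha \in [t_1,t_2]$,
\[
T_\alpha(f) = T_{\alpha - t_1}\bigl(T_{t_1}(f)\bigr) = T_{\alpha - t_1}\bigl(u(t_1)\bigr) = S_{\alpha - t_1}\bigl(u(t_1)\bigr) = u(\alpha),
\]
where the third equality is item 2 applied with datum $u(t_1)$ (the first breakpoint of the flow started at $u(t_1)$ is $t_2 - t_1$ by the flow's own semigroup property). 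Iterating over the finitely many breakpoints $t_1 < \cdots < t_M$ finishes the proof.

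Your differentiation approach can be made to work, but the ``main obstacle'' you identify --- showing $\tfrac{d^+}{d\alpha}\big|_{\alpha=0} u_\alpha = -\partial^\circ J(f)$ --- is in fact an immediate corollary of item 2: since $u_\alpha = u(\alpha) = f - \alpha\,\partial^\circ J(f)$ on $[0,t_1]$, the right derivative is read off directly, with no projection analysis needed. Even so, the paper's inductive argument is cleaner because it avoids differentiating altogether and does not require invoking uniqueness of the flow a second time.
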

\begin{proof}
\leavevmode
\begin{enumerate}
	\item We can express $u(\alpha)=f+\int^{\alpha}_{0}u^{\prime}(t)dt$. Recalling the optimality conditions \eqref{OC1} for the ROF minimizer $u_{\alpha}$, it follows that $u(\alpha)=u_{\alpha}$ if and only if $-\frac{1}{\alpha}\int^{\alpha}_{0}u^{\prime}(t)dt\in\partial J(u(\alpha))$.
	\item The proof is analogous to the one of \cite[Thm.\ 10]{Lasica1}. We include it for the sake of completeness.

Integrating \eqref{eq:suffcond} from $t=0$ to $t=\alpha$ gives
	\begin{equation}\label{eq:suffcond1}
		\langle f - u(\alpha), u(\alpha) \rangle_{\mathbb{R}^V} = \alpha J(u(\alpha)).
	\end{equation}
On the other hand, since $-u'(t)$ lies in $\partial J (0)$ for almost every $t$, so does its average $-\frac{1}{\alpha}\int_0^\alpha u'(t)\,dt$. Therefore
	\begin{equation}\label{eq:suffcond2}
		f - u(\alpha) = -\int_0^\alpha u'(t)\,dt \in \alpha \partial J (0).
	\end{equation}
Combining \eqref{eq:suffcond1} and \eqref{eq:suffcond2} shows that $f - u(\alpha) \in \alpha \partial J(u(\alpha))$, recall Lemma \ref{L1}. But this is just the optimality condition \eqref{OC1} for the ROF model, hence $u(\alpha) = u_\alpha.$

Next, recall that the flow solution satisfies
	$$ -u'(t) = \partial^\circ J (f) \in \partial J (u(t)), \quad t \in [0,t_1).$$
This implies by Lemma \ref{L1} that
	$$ \langle \partial^\circ J (f),u(t) \rangle = J(u(t)), \quad t \in [0,t_1),$$
and since $u$ is continuous in $t$
	$$ \langle \partial^\circ J (f),u(t_1) \rangle = J(u(t_1)).$$
Therefore condition \eqref{eq:suffcond} is satisfied for every $\alpha\in\left[0,t_1\right]$.
	\item Let $u_{\alpha}=u(\alpha)$ for all $\alpha\geq 0$. It then follows from property 4 in Theorem \ref{thm:flowwellposed} that $T_{t}(T_{s}(f))=T_{t+s}(f)$ for all $t,s\geq 0$.
\par Start now with the assumption $T_{t}(T_{s}(f))=T_{t+s}(f)$ for all $t,s\geq 0$. As the TV flow has an analogous property and the solutions to TV regularization and TV flow always coincide for the interval $\left[0,t_{1}\right]$ according to item 2 it is then immediate that they coincide for all $\alpha\geq 0$.
\end{enumerate}
\end{proof}
\begin{remark}\label{rem:eqcond}
\leavevmode
\begin{enumerate}
	\item Proposition \ref{thm:eqcond}, item 1, gives that $u(\alpha)=u_{\alpha}$ if and only if the average time derivative $\frac{1}{\alpha}\int^{\alpha}_{0}u^{\prime}(t)dt$ is in $-\partial J(u(\alpha))$. Compare with the pointwise inclusion $u^{\prime}(t)\in-\partial J(u(t))$ which holds for a.e. $t>0$. Note further that condition \eqref{eq:necsuffcond} is strictly weaker than \eqref{eq:suffcond}.
	\item Condition \eqref{eq:suffcond} holds true, given any $\alpha>0$, for graphs of the type displayed in Figure \ref{Figure1D} corresponding to one-dimensional space-discrete signals. This follows directly from the inclusion
 \begin{align} \label{inc}
 \partial J(u(s))\subset\partial J(u(t)),\quad s \leq t,
 \end{align}
which applies in this setting. The derivation of \eqref{inc} can be done with the following arguments. Consider a pair of adjacent vertices $v_{i}$ and $v_{i+1}$. In \cite[Prop.\ 4.1]{Steidl1}, it is shown that if $u(s)(v_{i})=u(s)(v_{i+1})$ then $u(t)(v_{i})=u(t)(v_{i+1})$ for any $t\geq s$. Taking into account the continuity of $t \mapsto u(t)$ and the characterization of the subdifferential given by item 3 in Lemma \ref{L1}, \eqref{inc} then follows.
\item Another family of instances where $u_\alpha = u(\alpha)$, for all $\alpha \ge 0$, arises from the eigenvalue problem for the TV subdifferential. This problem seems to have originally been studied in the continuous setting, where it was realized to give rise to explicit solutions of both the TV flow and the ROF model. See, for instance, \cite{AndCasDiaMaz02,BelCasNov02}. In the discrete setting the situation is similar. Following \cite{Burger1,Gil14} we call $f\in \mathbb{R}^V$ an \emph{eigenfunction} of $J$, if it satisfies $\lambda f \in \partial J (f)$ for some $\lambda \ge 0$. If the datum of the ROF model has this property, then the optimality condition \eqref{OC1} directly implies that
\begin{equation*}
	u_\alpha =
	\begin{cases}
		(1-\alpha \lambda) f,	& \alpha\lambda < 1, \\
		0,						& \alpha\lambda \ge 1.
	\end{cases}
\end{equation*}
See also \cite[Thm.\ 5]{BenBur13}. In other words $T_\alpha(f)$ is a nonnegative multiple of $f$, hence again an eigenfunction. A brief calculation now shows that \eqref{eq:semigrouprof} is satisfied.
\end{enumerate}
\end{remark}
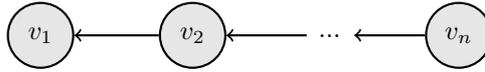
\begin{figure}
\centering
 \begin{tikzpicture}[scale=1]
 \node		(d) at (4,0) {};
 \node		    at (3.8,0) {...};
\tikzstyle{every node}=[draw, circle, thick, fill=gray!20, minimum width={width("fjhere")},]

	\node       	 (a) at (0,0)  {$v_{1}$};
	\node            (b) at (2,0)  {$v_{2}$};
	\node            (c) at (5.5,0)  {$v_{n}$};
	\draw[<-,thick] (a) edge (b);
	\draw[<-,thick] (b) edge (3.5,0);
 	\draw[<-,thick] (d) edge (c);
\end{tikzpicture}
\caption{Graph corresponding to a one-dimensional space-discrete signal with $n$ pixels.}
\label{Figure1D}
\end{figure}

\subsection{Negative results}\label{sec:example}
All results in this section are derived from the counterexample given by the graph and datum displayed in Figure \ref{Figure1}. While the corresponding solutions $u_\alpha$ and $u(t)$ are illustrated in Figures \ref{Figure2} and \ref{Figure3}, the underlying computations can be found in the appendix. Our main considerations in constructing this counterexample are explained below.

Proposition \ref{thm:rofconstrained} together with the fact that $\mathcal{B}_{\alpha} = -\mathcal{B}_{\alpha}$ implies that the ROF minimizer $u_{\alpha}$ can be written as
\begin{align} \label{reprof}
	u_{\alpha}=f+\divergence F_{\alpha},
\end{align}
for an $F_\alpha \in \mathcal{B}_{\alpha}$.
Regarding the TV flow, note that Lemma \ref{L1} and Proposition \ref{thm:flowproperties} guarantee the existence of a piecewise constant function $t \mapsto H(t) \in \mathcal{B}_{1,u(t)}$ with finitely many discontinuities satisfying
$$ u'(t) = - \divergence H(t)$$
for all but a finite number of times. Integrating and setting $F(t) = -\int_0^t H(s)ds$ we obtain the following representation
\begin{align}\label{repflow}
	u(t) = f + \divergence F(t).
\end{align}
Two properties concerning these representations are worth mentioning. First, the edge functions $F_\alpha$ and $F(t)$ are not uniquely determined in general. Second, $F(t)$ satisfies
$$\Big\|\frac{d^+}{dt} F(t) \Big\|_\infty \le 1$$
for all $t$, while the derivative of $F_\alpha$ in general is not bounded by one. The counterexample displayed in Figure \ref{Figure1} was constructed in such a way that $F_\alpha$ is uniquely determined and satisfies $\|dF_\alpha/d\alpha\|_\infty > 1$ for certain values of $\alpha.$ In fact, on the edge $e=(v_{32},v_{22})$ we have $dF_\alpha(e)/d\alpha = -3/2$ for $2/5 < \alpha < 2$, see Figure \ref{Figure2}.

\subsubsection{Nonequivalence of TV flow and TV regularization}
In spite of the similar qualitative properties of TV flow and TV regularization, recall Propositions \ref{P7} and \ref{thm:flowproperties}, the solutions $u(\alpha)$ and $u_{\alpha}$ do not coincide in general.
\begin{theorem} \label{prop:noneq}
\leavevmode
There exist graphs $G=(V,E)$ and data $f\in\mathbb{R}^{V}$ for which the TV regularization problem and the TV flow problem are non\-equivalent, i.e.
 \begin{align*}
  u_{\alpha}\neq u(\alpha),\text{ for some }\alpha>0.
 \end{align*}
\end{theorem}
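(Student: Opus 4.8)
The proof is by explicit counterexample. The plan is to take the graph $G=(V,E)$ and datum $f\in\mathbb{R}^V$ depicted in Figure~\ref{Figure1}, compute both the ROF solution $u_\alpha$ and the TV flow solution $u(t)$ in closed form for $\alpha,t$ ranging over an interval $[0,L]$ large enough that both have reached the stationary state $\bar f$, and then point to a value of $\alpha$ at which $u_\alpha\neq u(\alpha)$. The guiding principle behind the construction is the speed comparison sketched around \eqref{reprof}--\eqref{repflow}: writing $u_\alpha=f+\divergence F_\alpha$ with $F_\alpha\in\mathcal{B}_\alpha$ and $u(t)=f+\divergence F(t)$, the flow edge-potential always obeys $\|d^+F(t)/dt\|_\infty\le 1$, whereas the ROF edge-potential need not. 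If the graph and data are arranged so that $F_\alpha$ is uniquely determined by $u_\alpha-f$ and so that $\|dF_\alpha/d\alpha\|_\infty>1$ on some edge over some $\alpha$-interval, then $u_\alpha$ and $u(\alpha)$ cannot agree there.

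For $u_\alpha$: by Proposition~\ref{thm:rofconstrained}, $u_\alpha$ is the $\ell^2$-projection of the origin onto $f-\divergence\mathcal{B}_\alpha$, equivalently $u^*_\alpha=f-u_\alpha$ is the projection of $f$ onto $\divergence\mathcal{B}_\alpha$. By Proposition~\ref{P7} the map $\alpha\mapsto u_\alpha$ is continuous and piecewise affine with finitely many breakpoints $0<\alpha_1<\dots<\alpha_N$, and $u_\alpha\equiv\bar f$ for $\alpha\ge\alpha_N$. I would determine the pieces inductively: on each interval the sign pattern $\sgn(u_\alpha(v_i)-u_\alpha(v_j))$ over the edges is constant, which fixes the face $\partial J(u_\alpha)=\divergence\mathcal{B}_{1,u_\alpha}$ of $\divergence\mathcal{B}_1$ involved in the projection, so that $u_\alpha$ is obtained from a small linear system; the next breakpoint is the first $\alpha$ at which either the active edge-constraints in $\mathcal{B}_\alpha$ or the sign pattern changes. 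Carried out for the graph of Figure~\ref{Figure1}, this yields the explicit $u_\alpha$ and $F_\alpha$ recorded in Figure~\ref{Figure2}; in particular one reads off $dF_\alpha(e)/d\alpha=-3/2$ on $e=(v_{32},v_{22})$ for $2/5<\alpha<2$.

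For $u(t)$: by Theorem~\ref{thm:flowwellposed}(3) and Proposition~\ref{thm:flowproperties}(3), the flow is piecewise affine with finitely many slope changes $0<t_1<\dots<t_M$, and on each interval $u'(t)=-\partial^\circ J(u(t))$ is constant and equals the minimal-$\ell^2$-norm element of $\divergence\mathcal{B}_{1,u(t)}$. I would integrate piece by piece: starting from $f$, compute $\partial^\circ J$ of the current state (a small quadratic minimization over the polytope $\divergence\mathcal{B}_{1,u(t)}$), move with that constant velocity until two vertex values merge or a sign flips, then recompute. This gives the explicit $u(t)$ and $F(t)$ in Figure~\ref{Figure3}, with $\|d^+F(t)/dt\|_\infty\le 1$ throughout. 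For the comparison, note that the graph in Figure~\ref{Figure1} is chosen so that $\divergence$ is injective on the relevant edges, hence $u-f$ determines the edge potential; thus $u_\alpha=u(\alpha)$ on an interval would force $F_\alpha=F(\alpha)$ and so $\|dF_\alpha/d\alpha\|_\infty\le 1$ there, contradicting the value $3/2$ above. It therefore suffices to exhibit one $\alpha\in(2/5,2)$, which the explicit formulas do; alternatively one simply reads off $u_\alpha\neq u(\alpha)$ directly from Figures~\ref{Figure2} and \ref{Figure3}.

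The main obstacle is the design of the example, not any single estimate. One must choose $G$ and $f$ so that the ROF projection is genuinely forced to run an edge potential faster than unit speed — intuitively, ROF can jump to a distant state while the flow is speed-limited and therefore takes a qualitatively different, more gradual path through intermediate level-set configurations — while keeping the example small enough that all breakpoints $\alpha_i$, $t_j$ and all intermediate linear systems can be solved by hand, and keeping $F_\alpha$ uniquely pinned down so the speed argument has bite. Once the example is fixed, the remainder is the lengthy but routine bookkeeping of breakpoints and linear solves, which is deferred to the appendix; a secondary point to verify is that $L$ is large enough that both trajectories have reached $\bar f$, so that the two piecewise-affine evolutions are fully described.
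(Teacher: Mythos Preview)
Your approach is exactly the paper's: exhibit the explicit graph and datum of Figure~\ref{Figure1}, compute $u_\alpha$ and $u(t)$ piecewise on an interval (the paper uses $[0,4]$, with the detailed bookkeeping in the appendix), and read off directly from Figures~\ref{Figure2} and~\ref{Figure3} that $u_\alpha\neq u(\alpha)$ for $\alpha\in(2/5,4]$.

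Two small corrections. First, you do not need $L$ large enough for both trajectories to reach $\bar f$; on $[0,4]$ neither solution is anywhere near the constant state (e.g.\ $\bar f\approx 104.2$ while $u_4(v_{22})=26$), yet the comparison already decides the theorem. Second, your injectivity claim for $\divergence$ is false on this graph (twelve edges, nine vertices, and $\sum_v\divergence H(v)=0$ forces a kernel of dimension at least four), so ``$u_\alpha=u(\alpha)$ on an interval forces $F_\alpha=F(\alpha)$'' does not follow from that reasoning. In the paper the uniqueness of $F_\alpha$ comes instead from the fact that on eleven of the twelve edges the sign pattern of $u_\alpha$ pins $F_\alpha$ to $\pm\alpha$ via $\mathcal{B}_{1,u_\alpha}$, leaving a single free edge value fixed by the $\ell^2$-minimization. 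Since you also offer the direct comparison of the explicit formulas as an alternative, your proof still goes through; but the speed-of-$F_\alpha$ heuristic should be treated as motivation for the construction rather than as the formal mechanism.
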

\begin{proof}
 Consider the graph and the datum $f$ given in Figure \ref{Figure1}. For this example, the evolutions of $u_{\alpha}$ and $u(\alpha)$ on the interval $\left[0,4\right]$ are displayed in Figure \ref{Figure2} and Figure \ref{Figure3}, respectively. Note that $u_{\alpha}\neq u(\alpha)$ for $\alpha\in\left(2/5,4\right]$.
\end{proof}
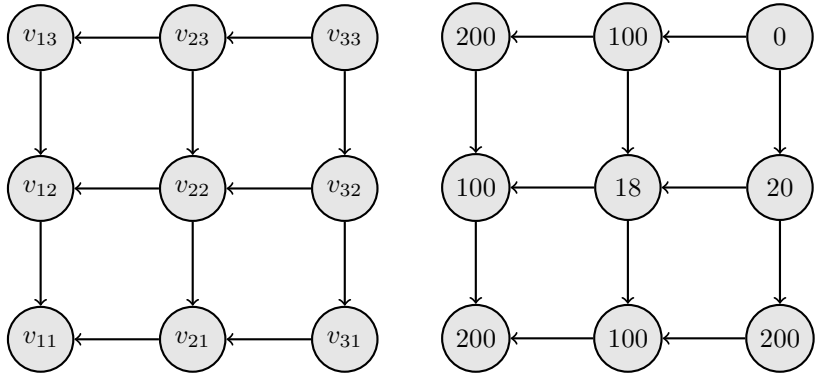
\begin{figure}
\centering
 \begin{tikzpicture}[scale=1]
\tikzstyle{every node}=[draw, circle, thick, fill=gray!20, minimum width={width("fjhere")},]

	\node       	 (a) at (0,0)  {$v_{12}$};
	\node            (b) at (2,0)  {$v_{22}$};
	\node            (c) at (4,0)  {$v_{32}$};
	\node            (d) at (2,2)  {$v_{23}$};
	\node            (e) at (2,-2) 	{$v_{21}$};
	\node            (f) at (0,2)  {$v_{13}$};
	\node            (g) at (0,-2) 	{$v_{11}$};
	\node            (h) at (4,-2) 	{$v_{31}$};
	\node            (i) at (4,2)  {$v_{33}$};
	\draw[<-,thick] (a) edge (b);
	\draw[<-,thick] (b) edge (c);
	\draw[<-,thick] (b) edge (d);
	\draw[<-,thick] (e) edge (b);
	\draw[<-,thick] (a) edge (f);
	\draw[<-,thick] (g) edge (a);
	\draw[<-,thick] (f) edge (d);
	\draw[<-,thick] (g) edge (e);
	\draw[<-,thick] (d) edge (i);
	\draw[<-,thick] (c) edge (i);
	\draw[<-,thick] (h) edge (c);
	\draw[<-,thick] (e) edge (h);
\end{tikzpicture}
\qquad
 \begin{tikzpicture}[scale=1]
\tikzstyle{every node}=[draw, circle, thick, fill=gray!20, minimum width={width("fjhere")},]

	\node       	 (a) at (0,0)  {$100$};
	\node            (b) at (2,0)  {$18$};
	\node            (c) at (4,0)  {$20$};
	\node            (d) at (2,2)  {$100$};
	\node            (e) at (2,-2)  {$100$};
	\node            (f) at (0,2)  {$200$};
	\node            (g) at (0,-2)  {$200$};
	\node            (h) at (4,-2)  {$200$};
	\node            (i) at (4,2)  {$0$};
	\draw[<-,thick] (a) edge (b);
	\draw[<-,thick] (b) edge (c);
	\draw[<-,thick] (b) edge (d);
	\draw[<-,thick] (e) edge (b);
	\draw[<-,thick] (a) edge (f);
	\draw[<-,thick] (g) edge (a);
	\draw[<-,thick] (f) edge (d);
	\draw[<-,thick] (g) edge (e);
	\draw[<-,thick] (d) edge (i);
	\draw[<-,thick] (c) edge (i);
	\draw[<-,thick] (h) edge (c);
	\draw[<-,thick] (e) edge (h);
\end{tikzpicture}
\caption{Left: graph structure. Right: datum $f$.}
\label{Figure1}
\end{figure}

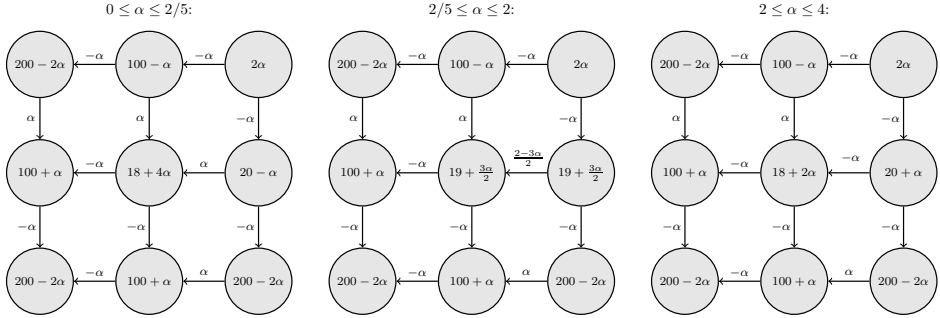
\begin{figure}
\centering
\scalebox{0.575}{
\begin{tikzpicture}
\node at (2.5,3.75) {$0\leq\alpha\leq 2/5$:};
{\footnotesize
\node       	  at (1.25,0.2)  {$-\alpha$};
\node       	  at (3.75,0.2)  {$\alpha$};
\node       	  at (1.25,2.7)  {$-\alpha$};
\node       	  at (3.75,2.7)  {$-\alpha$};
\node       	  at (1.25,-2.3)  {$-\alpha$};
\node       	  at (3.75,-2.3)  {$\alpha$};
\node       	  at (-0.3,-1.25)  {$-\alpha$};
\node       	  at (-0.2,1.25)  {$\alpha$};
\node       	  at (2.2,-1.25)  {$-\alpha$};
\node       	  at (2.3,1.25)  {$\alpha$};
\node       	  at (4.7,-1.25)  {$-\alpha$};
\node       	  at (4.7,1.25)  {$-\alpha$};
\tikzstyle{every node}=[draw, circle, thick, fill=gray!20, minimum width={width("fjhererleheh")},]
	\node       	 (a) at (0,0)  {$100+\alpha$};
	\node            (b) at (2.5,0)  {$18+4\alpha$};
	\node            (c) at (5,0)  {$20-\alpha$};
	\node            (d) at (2.5,2.5)  {$100-\alpha$};
	\node            (e) at (2.5,-2.5)  {$100+\alpha$};
	\node            (f) at (0,2.5)  {$200-2\alpha$};
	\node            (g) at (0,-2.5)  {$200-2\alpha$};
	\node            (h) at (5,-2.5)  {$200-2\alpha$};
	\node            (i) at (5,2.5)  {$2\alpha$};
	\draw[<-,thick] (a) edge (b);
	\draw[<-,thick] (b) edge (c);
	\draw[<-,thick] (b) edge (d);
	\draw[<-,thick] (e) edge (b);
	\draw[<-,thick] (a) edge (f);
	\draw[<-,thick] (g) edge (a);
	\draw[<-,thick] (f) edge (d);
	\draw[<-,thick] (g) edge (e);
	\draw[<-,thick] (d) edge (i);
	\draw[<-,thick] (c) edge (i);
	\draw[<-,thick] (h) edge (c);
	\draw[<-,thick] (e) edge (h);
	}
\end{tikzpicture}
\qquad
\begin{tikzpicture}
\node at (2.5,3.75) {$2/5\leq\alpha\leq 2$:};
{\footnotesize
\node       	  at (1.25,0.2)  {$-\alpha$};
\node       	  at (3.79,0.35)  {$\frac{2-3\alpha}{2}$};
\node       	  at (1.25,2.7)  {$-\alpha$};
\node       	  at (3.75,2.7)  {$-\alpha$};
\node       	  at (1.25,-2.3)  {$-\alpha$};
\node       	  at (3.75,-2.3)  {$\alpha$};
\node       	  at (-0.3,-1.25)  {$-\alpha$};
\node       	  at (-0.2,1.25)  {$\alpha$};
\node       	  at (2.2,-1.25)  {$-\alpha$};
\node       	  at (2.3,1.25)  {$\alpha$};
\node       	  at (4.7,-1.25)  {$-\alpha$};
\node       	  at (4.7,1.25)  {$-\alpha$};
\tikzstyle{every node}=[draw, circle, thick, fill=gray!20, minimum width={width("fjhererleheh")},]
	\node       	 (a) at (0,0)  {$100+\alpha$};
	\node            (b) at (2.5,0)  {$19+\frac{3\alpha}{2}$};
	\node            (c) at (5,0)  {$19+\frac{3\alpha}{2}$};
	\node            (d) at (2.5,2.5)  {$100-\alpha$};
	\node            (e) at (2.5,-2.5)  {$100+\alpha$};
	\node            (f) at (0,2.5)  {$200-2\alpha$};
	\node            (g) at (0,-2.5)  {$200-2\alpha$};
	\node            (h) at (5,-2.5)  {$200-2\alpha$};
	\node            (i) at (5,2.5)  {$2\alpha$};
	\draw[<-,thick] (a) edge (b);
	\draw[<-,thick] (b) edge (c);
	\draw[<-,thick] (b) edge (d);
	\draw[<-,thick] (e) edge (b);
	\draw[<-,thick] (a) edge (f);
	\draw[<-,thick] (g) edge (a);
	\draw[<-,thick] (f) edge (d);
	\draw[<-,thick] (g) edge (e);
	\draw[<-,thick] (d) edge (i);
	\draw[<-,thick] (c) edge (i);
	\draw[<-,thick] (h) edge (c);
	\draw[<-,thick] (e) edge (h);
	}
\end{tikzpicture}
\qquad
\begin{tikzpicture}
\node at (2.5,3.75) {$2\leq\alpha\leq 4$:};
{\footnotesize
\node       	  at (1.25,0.2)  {$-\alpha$};
\node       	  at (3.79,0.35)  {$-\alpha$};
\node       	  at (1.25,2.7)  {$-\alpha$};
\node       	  at (3.75,2.7)  {$-\alpha$};
\node       	  at (1.25,-2.3)  {$-\alpha$};
\node       	  at (3.75,-2.3)  {$\alpha$};
\node       	  at (-0.3,-1.25)  {$-\alpha$};
\node       	  at (-0.2,1.25)  {$\alpha$};
\node       	  at (2.2,-1.25)  {$-\alpha$};
\node       	  at (2.3,1.25)  {$\alpha$};
\node       	  at (4.7,-1.25)  {$-\alpha$};
\node       	  at (4.7,1.25)  {$-\alpha$};
\tikzstyle{every node}=[draw, circle, thick, fill=gray!20, minimum width={width("fjhererleheh")},]
	\node       	 (a) at (0,0)  {$100+\alpha$};
	\node            (b) at (2.5,0)  {$18+2\alpha$};
	\node            (c) at (5,0)  {$20+\alpha$};
	\node            (d) at (2.5,2.5)  {$100-\alpha$};
	\node            (e) at (2.5,-2.5)  {$100+\alpha$};
	\node            (f) at (0,2.5)  {$200-2\alpha$};
	\node            (g) at (0,-2.5)  {$200-2\alpha$};
	\node            (h) at (5,-2.5)  {$200-2\alpha$};
	\node            (i) at (5,2.5)  {$2\alpha$};
	\draw[<-,thick] (a) edge (b);
	\draw[<-,thick] (b) edge (c);
	\draw[<-,thick] (b) edge (d);
	\draw[<-,thick] (e) edge (b);
	\draw[<-,thick] (a) edge (f);
	\draw[<-,thick] (g) edge (a);
	\draw[<-,thick] (f) edge (d);
	\draw[<-,thick] (g) edge (e);
	\draw[<-,thick] (d) edge (i);
	\draw[<-,thick] (c) edge (i);
	\draw[<-,thick] (h) edge (c);
	\draw[<-,thick] (e) edge (h);
	}
\end{tikzpicture}
}
\caption{The evolution of the ROF minimizer $u_{\alpha}$ (on the vertices) and the function $F_{\alpha}$ (on the edges) on the interval $0\leq\alpha\leq 4$. The underlying computations can be found in the appendix.}
\label{Figure2}
\end{figure}

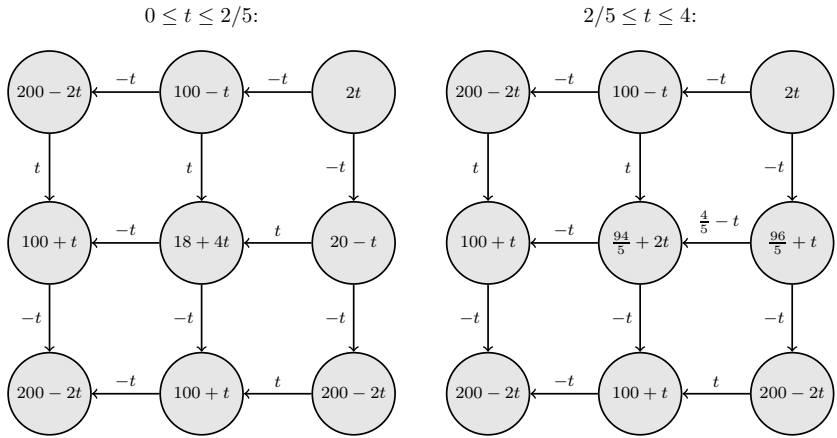
\begin{figure} 
\centering
\scalebox{0.8}{
\begin{tikzpicture}
\node at (2.5,3.75) {$0\leq t\leq 2/5$:};
{\footnotesize
\node       	  at (1.25,0.2)  {$-t$};
\node       	  at (3.75,0.2)  {$t$};
\node       	  at (1.25,2.7)  {$-t$};
\node       	  at (3.75,2.7)  {$-t$};
\node       	  at (1.25,-2.3)  {$-t$};
\node       	  at (3.75,-2.3)  {$t$};
\node       	  at (-0.3,-1.25)  {$-t$};
\node       	  at (-0.2,1.25)  {$t$};
\node       	  at (2.2,-1.25)  {$-t$};
\node       	  at (2.3,1.25)  {$t$};
\node       	  at (4.7,-1.25)  {$-t$};
\node       	  at (4.7,1.25)  {$-t$};
\tikzstyle{every node}=[draw, circle, thick, fill=gray!20, minimum width={width("fjhererlehe")},]
	\node       	 (a) at (0,0)  {$100+t$};
	\node            (b) at (2.5,0)  {$18+4t$};
	\node            (c) at (5,0)  {$20-t$};
	\node            (d) at (2.5,2.5)  {$100-t$};
	\node            (e) at (2.5,-2.5)  {$100+t$};
	\node            (f) at (0,2.5)  {$200-2t$};
	\node            (g) at (0,-2.5)  {$200-2t$};
	\node            (h) at (5,-2.5)  {$200-2t$};
	\node            (i) at (5,2.5)  {$2t$};
	\draw[<-,thick] (a) edge (b);
	\draw[<-,thick] (b) edge (c);
	\draw[<-,thick] (b) edge (d);
	\draw[<-,thick] (e) edge (b);
	\draw[<-,thick] (a) edge (f);
	\draw[<-,thick] (g) edge (a);
	\draw[<-,thick] (f) edge (d);
	\draw[<-,thick] (g) edge (e);
	\draw[<-,thick] (d) edge (i);
	\draw[<-,thick] (c) edge (i);
	\draw[<-,thick] (h) edge (c);
	\draw[<-,thick] (e) edge (h);
	}
\end{tikzpicture}
\qquad
\begin{tikzpicture}
\node at (2.5,3.75) {$2/5\leq t\leq 4$:};
{\footnotesize
\node       	  at (1.25,0.2)  {$-t$};
\node       	  at (3.79,0.35)  {$\frac{4}{5}-t$};
\node       	  at (1.25,2.7)  {$-t$};
\node       	  at (3.75,2.7)  {$-t$};
\node       	  at (1.25,-2.3)  {$-t$};
\node       	  at (3.75,-2.3)  {$t$};
\node       	  at (-0.3,-1.25)  {$-t$};
\node       	  at (-0.2,1.25)  {$t$};
\node       	  at (2.2,-1.25)  {$-t$};
\node       	  at (2.3,1.25)  {$t$};
\node       	  at (4.7,-1.25)  {$-t$};
\node       	  at (4.7,1.25)  {$-t$};
\tikzstyle{every node}=[draw, circle, thick, fill=gray!20, minimum width={width("fjhererlehe")},]
	\node       	 (a) at (0,0)  {$100+t$};
	\node            (b) at (2.5,0)  {$\frac{94}{5}+2t$};
	\node            (c) at (5,0)  {$\frac{96}{5}+t$};
	\node            (d) at (2.5,2.5)  {$100-t$};
	\node            (e) at (2.5,-2.5)  {$100+t$};
	\node            (f) at (0,2.5)  {$200-2t$};
	\node            (g) at (0,-2.5)  {$200-2t$};
	\node            (h) at (5,-2.5)  {$200-2t$};
	\node            (i) at (5,2.5)  {$2t$};
	\draw[<-,thick] (a) edge (b);
	\draw[<-,thick] (b) edge (c);
	\draw[<-,thick] (b) edge (d);
	\draw[<-,thick] (e) edge (b);
	\draw[<-,thick] (a) edge (f);
	\draw[<-,thick] (g) edge (a);
	\draw[<-,thick] (f) edge (d);
	\draw[<-,thick] (g) edge (e);
	\draw[<-,thick] (d) edge (i);
	\draw[<-,thick] (c) edge (i);
	\draw[<-,thick] (h) edge (c);
	\draw[<-,thick] (e) edge (h);
	}
\end{tikzpicture}
}

\caption{The evolution of the TV flow $u(t)$ (on the vertices) and the function $F(t)$ (on the edges) on the interval $0\leq t\leq 4$. The underlying computations can be found in the appendix.}
\label{Figure3}
\end{figure}
\begin{remark}
\leavevmode
\begin{enumerate}
	\item Proposition \ref{thm:eqcond}, item 3, combined with Theorem \ref{prop:noneq} gives that the ROF model in general does not possess the semigroup property \eqref{eq:semigrouprof}. This is in contrast to the situation for the TV flow, recall property 4 in Theorem \ref{thm:flowwellposed}.
	\item Recall Theorem \ref{thm:flowwellposed}, item 3, stating that $t \mapsto \|d^+u(t)/dt\|_2$ is nonincreasing. The ROF minimizer, in contrast, does not have an analogous property. Consider Figure \ref{Figure2}, from where it can be seen that $\lVert  du_{\alpha}/d\alpha \rVert_{{2}}$ increases from the interval $(2/5,2)$ to $(2,4)$.
	\item In \cite[Thm.\ 4.7]{Burger1} the authors give a sufficient condition for equivalence of the variational method and the gradient flow associated to a proper, convex, lower semicontinuous and absolutely one-homogeneous function $J$ on $\mathbb{R}^n$. This condition, called MINSUB, requires
	$$\langle \partial^\circ J(u) , \partial^\circ J(u) - u^* \rangle = 0$$
to hold for all $u \in \mathbb{R}^n$ and $u^* \in \partial J(u).$ Theorem \ref{prop:noneq} implies that the total variation as given in Definition \ref{def:tv} does not meet MINSUB on general graphs.
\end{enumerate}
\end{remark}

\subsubsection{Nonmonotone behaviour of jump sets}
For a given graph $G=(V,E)$ and datum $f\in \mathbb{R}^V$ we define the jump sets of the ROF and TV flow solutions in the following way
\begin{align*}
	\Gamma_\alpha	&= \left\{ (v,w) \in E :  u_\alpha(v) \neq u_\alpha(w) \right\}, \quad \alpha \ge 0, \\
	\Gamma(t)		&= \left\{ (v,w) \in E :  u(t)(v) \neq u(t)(w) \right\},\quad t \ge 0.
\end{align*}
Clearly, for $\alpha$ or $t$ large enough these two sets are empty. They do not, however, necessarily evolve in a monotone way.
\begin{proposition}\label{thm:jumpsets}
There are graphs $G=(V,E)$, data $f \in \mathbb{R}^V$ and numbers $\beta_2 > \beta_1 \ge 0$, $s_2 > s_1 \ge 0$, such that
\begin{align*}
	\Gamma_{\beta_1}	&\subsetneq \Gamma_{\beta_2}, \\
	\Gamma(s_1)			&\subsetneq \Gamma(s_2).
\end{align*}
\end{proposition}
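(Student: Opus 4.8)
The plan is to recycle the counterexample already used for Theorem~\ref{prop:noneq}: the graph $G$ and datum $f$ of Figure~\ref{Figure1}, whose ROF minimizer $u_\alpha$ and flow solution $u(t)$ on $[0,4]$ are displayed in Figures~\ref{Figure2} and~\ref{Figure3} and computed in the appendix. The point is that a \emph{single} edge, namely $e=(v_{32},v_{22})$, already exhibits the desired non-monotonicity in both evolutions: its two endpoints carry equal values only for an intermediate range of the parameter and differ again afterwards.

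First I would read off, directly from Figures~\ref{Figure2} and~\ref{Figure3}, the values of the two solutions at $v_{22}$ and $v_{32}$. For the ROF minimizer one has $u_\alpha(v_{22})=18+4\alpha$, $u_\alpha(v_{32})=20-\alpha$ on $[0,2/5]$; $u_\alpha(v_{22})=u_\alpha(v_{32})=19+\tfrac{3\alpha}{2}$ on $[2/5,2]$; and $u_\alpha(v_{22})=18+2\alpha$, $u_\alpha(v_{32})=20+\alpha$ on $[2,4]$. Hence on $[0,4]$ the endpoints of $e$ agree precisely for $\alpha\in[2/5,2]$, so that $e\notin\Gamma_\alpha$ exactly there. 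For the flow one has $u(t)(v_{22})=18+4t$, $u(t)(v_{32})=20-t$ on $[0,2/5]$ and $u(t)(v_{22})=\tfrac{94}{5}+2t$, $u(t)(v_{32})=\tfrac{96}{5}+t$ on $[2/5,4]$; both pairs are equal only at $t=2/5$, so $e\notin\Gamma(t)$ only for $t=2/5$ in the range $[0,4]$.

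Next I would verify that at $\alpha=3$ and at $t=1$ every one of the twelve edges of $G$ lies in the respective jump set, i.e.\ $\Gamma_3=\Gamma(1)=E$. This is a routine check: for each adjacent pair of vertices the difference of the two solution values is, on the relevant parameter interval, an affine function of $\alpha$ (resp.\ $t$), and inspection of Figures~\ref{Figure2} and~\ref{Figure3} shows that none of these affine functions vanishes at $\alpha=3$ or $t=1$. Granting this, the proposition follows at once: taking $\beta_1=1$ and $\beta_2=3$ gives $e\notin\Gamma_{\beta_1}$ while $\Gamma_{\beta_2}=E$, hence $\Gamma_{\beta_1}\subsetneq\Gamma_{\beta_2}$; taking $s_1=2/5$ and $s_2=1$ gives $e\notin\Gamma(s_1)$ while $\Gamma(s_2)=E$, hence $\Gamma(s_1)\subsetneq\Gamma(s_2)$.

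The substantive part of the argument is not in this proof but in the appendix: producing the explicit formulas for $u_\alpha$ and $u(t)$, which is where the careful choice of the datum $f$ does its job. Once those are in hand the present claim is a direct read-off, so I would not expect a genuine obstacle. The only point worth flagging is a qualitative difference between the two cases visible above: for the ROF model the jump set $\Gamma_\alpha$ drops from $E$ to $E\setminus\{e\}$ over the whole interval $(2/5,2)$ and then returns to $E$, whereas for the flow $\Gamma(t)$ equals $E$ for every $t\in[0,4]\setminus\{2/5\}$ and loses $e$ only at the single instant $t=2/5$; this forces the witnessing pair in the flow case to have $s_1=2/5$, which is still admissible in the statement.
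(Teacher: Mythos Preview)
The proposal is correct and follows essentially the same approach as the paper: both use the graph and datum of Figure~\ref{Figure1}, identify the edge $(v_{32},v_{22})$ as the one leaving and re-entering the jump set, and verify that the remaining edges stay in the jump set. The only difference is cosmetic: the paper records that $\sgn(u_\alpha(v)-u_\alpha(w))$ (and likewise for $u(t)$) is constant and nonzero on all of $[0,4]$ for every other edge, thereby obtaining whole ranges of admissible pairs $(\beta_1,\beta_2)$ and $(s_1,s_2)$, whereas you check the full jump set only at the single values $\alpha=3$ and $t=1$ and then use $\Gamma_{\beta_2}=\Gamma(s_2)=E$ to force the inclusion; both are adequate for the statement.
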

\begin{proof}
Consider the graph and datum of Figure \ref{Figure1}.

For the TV regularization, Figure \ref{Figure2} shows that
\begin{align*}
\sgn(u_{\alpha}(v_{32})-u_{\alpha}(v_{22}))=
 \left\{
\begin{array}{rl}
1, & 0\leq\alpha< 2/5,\\
0, & 2/5\leq\alpha\leq 2,\\
-1, & 2<\alpha\leq 4.
\end{array}
\right.
\end{align*}
That is, the jump between $u_{\alpha}(v_{22})$ and $u_{\alpha}(v_{32})$ disappears for $2/5\leq\alpha\leq 2$ but appears again, with reversed sign, for $2<\alpha\leq 4$. For all other edges $(v,w)$ the quantity $\sgn(u_{\alpha}(v)-u_{\alpha}(w))$ is constant on $[0,4].$ This shows that $\Gamma_{\beta_1} \subsetneq \Gamma_{\beta_2}$ for every $\beta_1 \in [2/5,2]$ and $\beta_2 \in (2,4].$

For the TV flow, see Figure \ref{Figure3}, we have
\begin{align*}
\sgn(u(t)(v_{32})-u(t)(v_{22}))=
 \left\{
\begin{array}{rl}
1, & 0\leq t< 2/5,\\
0, & t=2/5,\\
-1, & 2/5<t\leq 4.
\end{array}
\right.
\end{align*}
Here the jump between $u(t)(v_{22})$ and $u(t)(v_{32})$ disappears at $t=2/5$ and then a jump with reversed sign appears for $2/5<t\leq 4$. Again, for all other edges $(v,w)$ the quantity $\sgn(u(t)(v)-u(t)(w))$ is constant on $[0,4].$ Thus, $\Gamma (2/5) \subsetneq \Gamma(s_2)$ for every $s_2\in (2/5,4].$
\end{proof}

\begin{remark}
For one-dimensional graphs, however, the jump sets are nonincreasing, see item 2 in Remark \ref{rem:eqcond}. On the other hand, in the continuous anisotropic setting it is known that jumps can be created in the solution, see \cite[Rem.~4]{Casalles1} and \cite[Ex.~1]{Lasica1}.
\end{remark}
\begin{remark}
We stress that $\beta_1$ and $s_1$ can be equal to zero in Proposition \ref{thm:jumpsets}. To see this consider the datum $\tilde{f}$ and solutions $u_\alpha=u(\alpha)$ given in Figure \ref{Figure4}.
\begin{figure}
\centering
\scalebox{0.8}{
 \begin{tikzpicture}
\tikzstyle{every node}=[draw, circle, thick, fill=gray!20, minimum width={width("fjhererlehe")},]
{\footnotesize
	\node       	 (a) at (0,0)  {$100$};
	\node            (b) at (2.5,0)  {$20$};
	\node            (c) at (5,0)  {$20$};
	\node            (d) at (2.5,2.5)  {$100$};
	\node            (e) at (2.5,-2.5)  {$100$};
	\node            (f) at (0,2.5) {$200$};
	\node            (g) at (0,-2.5)  {$200$};
	\node            (h) at (5,-2.5)  {$200$};
	\node            (i) at (5,2.5)  {$0$};
	\draw[<-,thick] (a) edge (b);
	\draw[<-,thick] (b) edge (c);
	\draw[<-,thick] (b) edge (d);
	\draw[<-,thick] (e) edge (b);
	\draw[<-,thick] (a) edge (f);
	\draw[<-,thick] (g) edge (a);
	\draw[<-,thick] (f) edge (d);
	\draw[<-,thick] (g) edge (e);
	\draw[<-,thick] (d) edge (i);
	\draw[<-,thick] (c) edge (i);
	\draw[<-,thick] (h) edge (c);
	\draw[<-,thick] (e) edge (h);
	}
\end{tikzpicture}
\qquad
 \begin{tikzpicture}
{\footnotesize
\node       	  at (1.25,0.2)  {$-\alpha$};
\node       	  at (3.75,0.2)  {$-\alpha$};
\node       	  at (1.25,2.7)  {$-\alpha$};
\node       	  at (3.75,2.7)  {$-\alpha$};
\node       	  at (1.25,-2.3)  {$-\alpha$};
\node       	  at (3.75,-2.3)  {$\alpha$};
\node       	  at (-0.3,-1.25)  {$-\alpha$};
\node       	  at (-0.2,1.25)  {$\alpha$};
\node       	  at (2.2,-1.25)  {$-\alpha$};
\node       	  at (2.3,1.25)  {$\alpha$};
\node       	  at (4.7,-1.25)  {$-\alpha$};
\node       	  at (4.7,1.25)  {$-\alpha$};
\tikzstyle{every node}=[draw, circle, thick, fill=gray!20, minimum width={width("fjhererlehe")},]
	\node       	 (a) at (0,0)  {$100+\alpha$};
	\node            (b) at (2.5,0)  {$20+2\alpha$};
	\node            (c) at (5,0)  {$20+\alpha$};
	\node            (d) at (2.5,2.5)  {$100-\alpha$};
	\node            (e) at (2.5,-2.5)  {$100+\alpha$};
	\node            (f) at (0,2.5)  {$200-2\alpha$};
	\node            (g) at (0,-2.5)  {$200-2\alpha$};
	\node            (h) at (5,-2.5)  {$200-2\alpha$};
	\node            (i) at (5,2.5)  {$2\alpha$};
	\draw[<-,thick] (a) edge (b);
	\draw[<-,thick] (b) edge (c);
	\draw[<-,thick] (b) edge (d);
	\draw[<-,thick] (e) edge (b);
	\draw[<-,thick] (a) edge (f);
	\draw[<-,thick] (g) edge (a);
	\draw[<-,thick] (f) edge (d);
	\draw[<-,thick] (g) edge (e);
	\draw[<-,thick] (d) edge (i);
	\draw[<-,thick] (c) edge (i);
	\draw[<-,thick] (h) edge (c);
	\draw[<-,thick] (e) edge (h);
	}
\end{tikzpicture}
}
\caption{Left: datum $\tilde{f}$. Right: $u_{\alpha}=u(\alpha)$ (on the vertices) and $F_{\alpha}=F(\alpha)$ (on the edges) for 
$\alpha \in [0,4]$.}
\label{Figure4}
\end{figure}
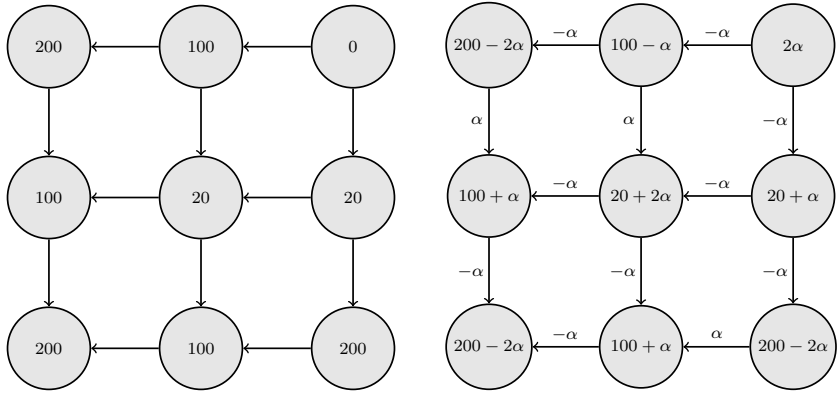
Note that $\tilde{f}$ is equal to $f$ from Figure \ref{Figure1} except for $v_{22}$ where $\tilde{f}(v_{22})=20$. The underlying calculations are analogous to the ones for $f$ and are therefore omitted.
A jump between the vertices $v_{22}$ and $v_{32}$, which is not present in the datum $\tilde{f}$, is created in $u_{\alpha}=u(\alpha)$, $0<\alpha\leq 4$. Thus the jump set of an image resulting from TV regularization or TV flow can strictly contain the jump set of the datum. 
\end{remark}

\section{Conclusion}

In this article we have studied and compared TV regularization and TV flow for functions defined on the vertices of an oriented connected graph. Our motivation was the discrete one-dimensional setting, where the two problems are known to be equivalent and their solution minimizes a large class of convex functionals in a certain neighbourhood of the data.

It turns out that in the graph setting this situation can only be recovered for $\alpha,t \in [0,t_1] \cup [t_M,\infty)$, the reason being that on the complement $(t_1,t_M)$ the ROF and flow solution are in general different. Here $t_1$ and $t_M$ are the first and last times, respectively, the time derivative of the flow solution changes.

In addition we have shown that for every $\alpha \ge 0$ the ROF minimizer $u_\alpha$ simultaneously minimizes all functionals of the form
\begin{equation}\label{eq:asf}
	\sum_{v\in V} \varphi (u(v))
\end{equation}
over the set $f - \alpha \partial J(0)$, where $\varphi:\mathbb{R}\to \mathbb{R}$ is convex but otherwise arbitrary. In doing so we have relied on the fact that $\partial J$ is invariant $\varphi$-minimal. Since invariant $\varphi$-minimal sets must be polyhedra, the subdifferential of discrete isotropic total variation cannot be such a set. Consequently, the minimizer of the isotropic ROF model in general does not have property \eqref{eq:asf}.

\subsection*{Acknowledgements}
We acknowledge support by the Austrian Science Fund (FWF) within the national research network S117 ``Geometry $+$ Simulation," subproject 4.
In addition, the work of OS is supported by project I 3661 ``Novel Error Measures and Source Conditions of Regularization Methods," jointly funded by FWF and Deutsche Forschungsgemeinschaft (DFG).
We are grateful to an anonymous referee of a previous version of this article, who pointed out the connection between invariant $\varphi$-minimal sets and submodular functions.

\appendix
\section{Appendix: TV denoising on a particular graph}\label{sec:appendix}
In this appendix we consider the graph and datum given by Figure \ref{Figure1} and compute the solutions of the TV regularization problem and the TV flow problem on the interval $\left[0,4\right]$.

\subsubsection*{TV regularization}
Recall that the ROF minimizer $u_{\alpha}$ can be represented as
\begin{align*}
 u_{\alpha}=f+\divergence F_{\alpha},
\end{align*}
where $F_{\alpha}\in\mathcal{B}_{\alpha}$, see equation \eqref{reprof}. Below, $F_{\alpha}$ is computed for $\alpha\in\left[0,4\right]$ which then enables computation of $u_{\alpha}$ on this interval.
\par We have for any $v\in V$,
\begin{align} \label{inrof}
  f(v)-\deg(v)\alpha\leq u_{\alpha}(v)\leq f(v)+\deg(v)\alpha,
\end{align}
where $\deg(v)$ denotes the degree of $v$, that is, the number of edges incident to $v$. Using \eqref{inrof} it is straightforward to show that
\begin{align*} 
\sgn(u_{\alpha}(v_{ij})-u_{\alpha}(v_{kl}))=\sgn(f(v_{ij})-f(v_{kl})) \in \{\pm 1\}
\end{align*}
for all edges $(v_{ij},v_{kl})$ except $(v_{32},v_{22})$ on the interval $0\leq\alpha\leq 4$. The optimality condition \eqref{OC1} together with the equality $\partial J(u)=\divergence\mathcal{B}_{1,u}$ (recall Lemma \ref{L1}, item 3) then gives
\begin{align*}
F_{\alpha}((v_{ij},v_{kl}))=\alpha\sgn(f(v_{ij})-f(v_{kl})),
\end{align*}
for all $(v_{ij},v_{kl})\in E\backslash\left\{(v_{32},v_{22})\right\}$ and $0\leq\alpha\leq 4$.
\par Consider now the special edge $(v_{32},v_{22})$. Using the knowledge of $F_{\alpha}$ on the other edges, $u_{\alpha}(v_{22})$ and $u_{\alpha}(v_{32})$ are given by
\begin{align*}
  u_{\alpha}(v_{22})&=f(v_{22})+F_{\alpha}((v_{32},v_{22}))+F_{\alpha}((v_{23},v_{22}))-F_{\alpha}((v_{22},v_{12}))-F_{\alpha}((v_{22},v_{21}))\\
 &=18+F_{\alpha}((v_{32},v_{22}))+3\alpha,
\end{align*}
and
\begin{align*}
  u_{\alpha}(v_{32})&=f(v_{32})-F_{\alpha}((v_{32},v_{22}))+F_{\alpha}((v_{33},v_{32}))-F_{\alpha}((v_{32},v_{31}))\\
  &=20-F_{\alpha}((v_{32},v_{22})),
 \end{align*}
for $0\leq\alpha\leq 4$. Recall further that $u_{\alpha}$ is the $\ell^{2}$-minimizer in the set $f-\divergence\mathcal{B}_{\alpha}$, cf.~Proposition \ref{thm:rofconstrained}, and that $F_{\alpha}((v_{32},v_{22}))$ only appears in the terms $u_{\alpha}(v_{22})$ and $u_{\alpha}(v_{32})$. Minimizing $(u_{\alpha}(v_{22}))^2+(u_{\alpha}(v_{32}))^2$ subject to the constraint $F_{\alpha}((v_{32},v_{22}))\in\left[-\alpha,\alpha\right]$ then gives
\begin{align*}
 F_{\alpha}((v_{32},v_{22}))=
 \left\{
\begin{array}{rl}
\alpha, & 0\leq\alpha\leq 2/5,\\
(2-3\alpha)/2, & 2/5\leq\alpha\leq 2,\\
-\alpha, & 2\leq\alpha\leq 4.
\end{array}
\right.
\end{align*}
\par The function $F_{\alpha}$ is now determined on all edges on the interval $\alpha\in\left[0,4\right]$. The ROF minimizer $u_{\alpha}$ can then be computed according to \eqref{reprof}. The results can be seen in Figure \ref{Figure2}.

\subsubsection*{TV flow}
Recall that, according to \eqref{repflow}, the solution $u(t)$ of the TV flow problem can be represented as
\begin{align*} 
 u(t)=f+\divergence(F(t)),
\end{align*}
where $F(t) = -\int^{t}_{0}H(s)ds$ and $H(s)\in \mathcal{B}_{1,u(s)}$. In particular, $F(t)\in\mathcal{B}_{t}$. Below, $F(t)$ is computed for $t\in\left[0,4\right]$ which then enables computation of $u(t)$ on this interval.
\par We have an analogous inequality to \eqref{inrof},
\begin{align} \label{inflow}
 f(v)-\deg(v)t\leq u(t)(v)\leq f(v)+\deg(v)t
\end{align}
for all $v\in V$. Using \eqref{inflow}, we can derive that
\begin{align} \label{sgneq2}
\sgn(u(t)(v_{kl})-u(t)(v_{ij}))=\sgn(f(v_{kl})-f(v_{ij})) \in \{\pm 1\}
\end{align}
holds for any edge $(v_{ij},v_{kl})\in E\backslash\left\{(v_{32},v_{22})\right\}$ and $0\leq t\leq 4$. From \eqref{sgneq2} and $H(s)\in \mathcal{B}_{1,u(s)}$ it follows in turn that
\begin{align*}
H(s)((v_{ij},v_{kl}))=\sgn(f(v_{kl})-f(v_{ij}))
\end{align*}
for all $(v_{ij},v_{kl})\in E\backslash\left\{(v_{32},v_{22})\right\}$ and $0\leq t\leq4$. Hence,
\begin{align*}
F(t)((v_{ij},v_{kl}))=-\int^{t}_{0}H(s)((v_{ij},v_{kl}))ds=t\sgn(f(v_{ij})-f(v_{kl})),
\end{align*}
for all $(v_{ij},v_{kl})\in E\backslash\left\{(v_{32},v_{22})\right\}$ and $0\leq t\leq 4$.
\par Turn next to the computation of $F(t)((v_{32},v_{22}))$ on $0\leq t\leq 4$. Knowledge of $F(t)$ on the other edges gives
\begin{align} \label{ualpha1}
 u(t)(v_{22})=18+3t+F(t)((v_{32},v_{22})),
\end{align}
and
\begin{align} \label{ualpha2}
 u(t)(v_{32})=20-F(t)((v_{32},v_{22})),
\end{align}
on $0\leq t\leq 4$. From \eqref{ualpha1} and \eqref{ualpha2}, together with $F(t)\in\mathcal{B}_{t}$, follow the inequalities
\begin{align*}
u(t)(v_{22})\leq 18 + 4t< 20-t\leq u(t)(v_{32}), \quad 0\leq t< 2/5.
\end{align*}
These inequalities imply that
\begin{align*}
\sgn(u(t)(v_{22})-u(t)(v_{32}))=-1,\quad 0\leq t< 2/5,
\end{align*}
and therefore
\begin{align*}
H(t)((v_{32},v_{22}))=-1,\quad 0\leq t< 2/5.
\end{align*}
We then obtain
\begin{align*}
F(t)((v_{32},v_{22}))=-\int^{t}_{0}H(s)((v_{32},v_{22}))ds=t, \quad 0\leq t\leq 2/5.
\end{align*}
Consider now the interval $2/5\leq t\leq 4$ where we estimate
\begin{align*}
F(t)((v_{32},v_{22}))&=F(2/5)((v_{32},v_{22}))-\int^{t}_{2/5}H(s)((v_{32},v_{22}))ds\\
	&\geq 2/5-(t-2/5)=4/5-t
\end{align*}
This inequality together with \eqref{ualpha1} and \eqref{ualpha2} give
\begin{align*}
u(t)(v_{32})\leq 96/5+t< 94/5+2t\leq u(t)(v_{22}), \quad 2/5< t\leq 4.
\end{align*}
From these inequalities it follows that
\begin{align*}
H(t)((v_{32},v_{22}))=\sgn(u(t)(v_{22})-u(t)(v_{32}))=1, \quad 2/5<t\leq 4,
\end{align*}
which in turn gives
\begin{align*}
 F(t)((v_{32},v_{22}))=4/5-t, \quad 2/5\leq t\leq 4.
\end{align*}
\par The function $F(t)$ is now determined on all edges on the interval $t\in\left[0,4\right]$. The solution $u(t)$ of the TV flow problem can then be computed according to \eqref{repflow}. The results can be seen in Figure \ref{Figure3}.

\begin{footnotesize}

\end{footnotesize}

\end{document}